%


\documentclass{SCAE}
\numberwithin{equation}{section}

\def\pr{\textsf{P}} 
\def\ep{\textsf{E}} 
\def\Sbep{\widehat{\mathbb E}} 
\def\cSbep{\widehat{\mathcal E}} 
\def\extSbep{\mathbb E^{\ast}} 
\def\Capc{\mathbb V} 
\def\cCapc{\mathcal V} 
\def\outCapc{\mathbb V^{\ast}}
\def\outcCapc{\mathcal V^{\ast}}


\begin{document}

\Year{2016} %
\Month{April}
\Vol{59} %
\No{4} %
\BeginPage{751} %
\EndPage{768} %
\AuthorMark{Zhang L X}
\ReceivedDay{August 22, 2014}
\AcceptedDay{July 14, 2015}
\PublishedOnlineDay{; published online  December 23, 2015}
\DOI{10.1007/s11425-015-5105-2} 

\title{ Rosenthal's inequalities for independent and negatively dependent random variables
  under sub-linear expectations with applications}{}


\author[1]{ZHANG Li-Xin}{}

\address[{\rm1}]{ School of Mathematics, Zhejiang University, Hangzhou {\rm 310027}, P. R. China}
\Emails{stazlx@zju.edu.cn}\maketitle


 {\begin{center}
\parbox{14.5cm}{\begin{abstract}
Classical Kolmogorov's and Rosenthal's inequalities
for the maximum partial sums of random variables are basic tools for studying the strong laws of large numbers.
In this paper, motived by the notion of independent and identically distributed  random variables
under the sub-linear expectation  initiated by Peng\cite{Peng2006}\cite{Peng2008b}, we introduce the concept of negative dependence of random variables and establish
Kolmogorov's and Rosenthal's inequalities  for the maximum partial sums of negatively  dependent random variables under the sub-linear expectations.
As an application, we show that Kolmogorov's strong law of larger numbers holds for independent and identically distributed random variables
under a continuous sub-linear expectation if and only if
the corresponding Choquet integral is finite.
\vspace{-3mm}
\end{abstract}}\end{center}}

 \keywords{sub-linear expectation, capacity, Kolmogorov's inequality, Rosenthal's  inequality,  negative dependence, strong laws of large numbers}

 \MSC{60F15, 60F05}

\renewcommand{\baselinestretch}{1.2}
  \renewcommand{\arraystretch}{1.5}

\baselineskip 11pt\parindent=10.8pt  \wuhao

\section{ Introduction and notations.}
\setcounter{equation}{0}

Non-additive probabilities and non-additive expectations  are useful tools for studying  uncertainties in statistics, measures of risk,
superhedging in finance and non-linear stochastic calculus, c.f. Denis and  Martini\cite{DM2006}, Gilboa\cite{Gilboa1987},
Marinacci\cite{Marinacci1999},  Peng\cite{Peng1997}\cite{Peng1999}\cite{Peng2006}\cite{Peng2008a} etc.
This paper considers the  general sub-linear expectations and related non-additive probabilities generated by them.
The notion of independent and identically distributed random variables under  the sub-linear expectations is introduced by  Peng\cite{Peng2006}\cite{Peng2008b} and
the weak convergence such as  central limit theorems and weak laws of large numbers are studied.
Because   the proofs of classical Kolmogorov's inequalities and Rosenthal's inequalities
for the maximum partial sums of random variables depend basically on the additivity of the probabilities and the expectations,
 such inequalities have not been established under  the sub-linear expectations.
 As a result, very few results on strong laws of larger numbers   are found under the sub-linear expectations.
  Recently, Chen\cite{Chen2010} obtained Kolmogorov's strong law of larger numbers for i.i.d.
   random variables under the condition of finite $(1+\epsilon)$-moments by establishing an inequality of an exponential moment of partial sums of
  truncated independent random variables. The moment condition is much stronger than the one for the classical   Kolmogorov strong law of larger numbers.
  Also, Gao and  Xu\cite{GaoXu2011}\cite{GaoXu2012} studied the large deviations and moderate deviations for quasi-continuous random variables in a
complete separable metric space under the Choquet capacity generalized by a regular sub-linear expectation.
    The main purpose of this paper is to establish basic inequalities for the maximum partial sums of independent random variables
    in the general sub-linear expectation spaces.
    These inequalities are basic tools to study the strong limit theorems. They are also essential tools to prove the functional central limit theorem (c.f. Zhang\cite{Zhang2015}).  In the remainder of this section, we give some notations under the sub-linear expectations.
  For explaining our main idea, we prove Kolmogorov's inequality as our first result. And then, we introduce the concept of negative dependence under the sub-linear expectation
  which is  an extension of independence as well as the classical negative dependence.
   In the next section,  we establish Rosenthal's inequalities for this kind of negatively dependent random variables.
   In Section 3, as applications of these inequalities, we establish the Kolmogorov
   type strong laws of large numbers under the weakest moment conditions. In particular, we show that Kolmogorov's
   type strong law of large numbers holds for independent and identically distributed random variables under a continuous sub-linear expectation if and only if
   the the corresponding Choquet integral is finite.

We use the notations of Peng\cite{Peng2008b}. Let  $(\Omega,\mathcal F)$
 be a given measurable space  and let $\mathscr{H}$ be a linear space of real functions
defined on $(\Omega,\mathcal F)$ such that if $X_1,\ldots, X_n \in \mathscr{H}$  then $\varphi(X_1,\ldots,X_n)\in \mathscr{H}$ for each
$\varphi\in C_{l,Lip}(\mathbb R_n)$,  where $C_{l,Lip}(\mathbb R_n)$ denotes the linear space of (local Lipschitz)
functions $\varphi$ satisfying
\begin{eqnarray*} & |\varphi(\bm x) - \varphi(\bm y)| \le  C(1 + |\bm x|^m + |\bm y|^m)|\bm x- \bm y|, \;\; \forall \bm x, \bm y \in \mathbb R_n,&\\
& \text {for some }  C > 0, m \in \mathbb  N \text{ depending on } \varphi. &
\end{eqnarray*}
$\mathscr{H}$ is considered as a space of ``random variables''. In this case we denote $X\in \mathscr{H}$.
\begin{remark} It is easily seen that if $\varphi_1,\varphi_2\in C_{l,Lip}(\mathbb R_n)$, then $\varphi_1\vee \varphi_2, \varphi_1\wedge \varphi_2\in C_{l,Lip}(\mathbb R_n)$ because $\varphi_1\vee \varphi_2=\frac{1}{2}( \varphi_1+ \varphi_2+|\varphi_1- \varphi_2|)$, $\varphi_1\wedge \varphi_2=\frac{1}{2}( \varphi_1+ \varphi_2-|\varphi_1- \varphi_2|)$.
\end{remark}

\begin{definition} A {\bf sub-linear expectation} $\Sbep$ on $\mathscr{H}$  is a functional $\Sbep: \mathscr{H}\to \overline{\mathbb R}:=[-\infty,\infty]$ satisfying the following properties: for all $X, Y \in \mathscr H$, we have
\begin{description}
  \item[\rm (a)] {\bf Monotonicity}: If $X \ge  Y$ then $\Sbep [X]\ge \Sbep [Y]$;
\item[\rm (b)] {\bf Constant preserving} : $\Sbep [c] = c$;
\item[\rm (c)] {\bf Sub-additivity}: $\Sbep[X+Y]\le \Sbep [X] +\Sbep [Y ]$  whenever $\Sbep [X] +\Sbep [Y ]$ is not of the form $+\infty-\infty$ or $-\infty+\infty$;
\item[\rm (d)] {\bf Positive homogeneity}: $\Sbep [\lambda X] = \lambda \Sbep  [X]$, $\lambda\ge 0$.
 \end{description}
 The triple $(\Omega, \mathscr{H}, \Sbep)$ is called a sub-linear expectation space. Give a sub-linear expectation $\Sbep $, let us denote the conjugate expectation $\cSbep$of $\Sbep$ by
$$ \cSbep[X]:=-\Sbep[-X], \;\; \forall X\in \mathscr{H}. $$
Obviously, for all $X\in \mathscr{H}$, $\cSbep[X]\le \Sbep[X]$. We also call $\Sbep[X]$ and $\cSbep[X]$ the upper-expectation and lower-expectation of $X$ respectively.
\end{definition}

\begin{definition} ({\em Peng\cite{Peng2006}\cite{Peng2008b}})
\begin{description}
  \item[ \rm (i)] ({\bf Identical distribution}) Let $\bm X_1$ and $\bm X_2$ be two $n$-dimensional random vectors defined
respectively in sub-linear expectation spaces $(\Omega_1, \mathscr{H}_1, \Sbep_1)$
  and $(\Omega_2, \mathscr{H}_2, \Sbep_2)$. They are called identically distributed, denoted by $\bm X_1\overset{d}= \bm X_2$  if
$$ \Sbep_1[\varphi(\bm X_1)]=\Sbep_2[\varphi(\bm X_2)], \;\; \forall \varphi\in C_{l,Lip}(\mathbb R_n), $$
whenever the sub-expectations are finite.
\item[\rm (ii)] ({\bf Independence})   In a sub-linear expectation space  $(\Omega, \mathscr{H}, \Sbep)$, a random vector $\bm Y =
(Y_1, \ldots, Y_n)$, $Y_i \in \mathscr{H}$ is said to be independent to another random vector $\bm X =
(X_1, \ldots, X_m)$ , $X_i \in \mathscr{H}$ under $\Sbep$  if for each test function $\varphi\in C_{l,Lip}(\mathbb R_m \times \mathbb R_n)$
we have
$$ \Sbep [\varphi(\bm X, \bm Y )] = \Sbep \big[\Sbep[\varphi(\bm x, \bm Y )]\big|_{\bm x=\bm X}\big],$$
whenever $\overline{\varphi}(\bm x):=\Sbep\left[|\varphi(\bm x, \bm Y )|\right]<\infty$ for all $\bm x$ and
 $\Sbep\left[|\overline{\varphi}(\bm X)|\right]<\infty$.
 \item[\rm (iii)] ({\bf IID random variables}) A sequence of random variables $\{X_n; n\ge 1\}$ is said to be independent, if
$X_{i+1}$ is independent to $(X_1,\ldots, X_i)$ for each $i\ge 1$. It is said to be identically distributed, if
 $X_i\overset{d}=X_1$   for each $i\ge 1$.
 \end{description}
\end{definition}

As shown by Peng\cite{Peng2008b}, it is important to note that under sub-linear expectations
the condition that ``$\bm Y$ is independent to $\bm X$'' does not implies automatically that ``$\bm X$
is independent to $\bm Y$''.

From the definition of independence, it is easily seen that, if $Y$ is independent to $X$ and $X\ge 0, \Sbep [Y]\ge 0$, then
\begin{equation}\label{eq1.1} \Sbep[XY]=\Sbep[X]\Sbep[Y].
\end{equation}
Further,  if $Y$ is independent to $X$ and $X\ge 0, Y\ge 0$, then
\begin{equation}\label{eq1.2} \Sbep[XY]=\Sbep[X]\Sbep[Y], \;\; \cSbep[XY]=\cSbep[X]\cSbep[Y].
\end{equation}

If  $\{X_n;n\ge 1\}$ is a sequence of independent random variables with both the upper-expectations $\Sbep[X_i]$ and lower-expectations $\cSbep[X_i]$ being zeros, then it is easily checked that
$$ \Sbep\Big[S_n^2\Big]=\Sbep\Big[\sum_{k=1}^nX_k^2+\sum_{i\ne j} X_iX_j\Big]=\sum_{k=1}^n\Sbep[X_k^2], $$
because $\Sbep[X_iX_j]=\cSbep[X_iX_j]=0$ for $i\ne j$ by the definition of the independence, where $S_n=\sum_{k=1}^nX_k$.  However, when the popular truncation method  is used for studying the limit theorems, the truncated random variables usually no longer have zero sub-linear expectations. It is hard to centralize a random variable such that its  upper-expectation  and lower-expectation are both zeros. But is is easy to centralize a random variable $X$ such that one of $\Sbep[X]$ and $\cSbep[X]$ is zero. For example,  the random variable $X-\Sbep[X]$ has  zero  upper-expectation.
So, the moments of $S_n$ with the condition $\Sbep[X_i]=0$ ($i=1,\ldots,n$) are much useful than those with the condition $\Sbep[X_i]=\cSbep[X_i]=0$ ($i=1,\ldots,n$). Unfortunately, by noting that the independence of  $X$ and $Y$ does not imply
$ \Sbep\left[\big(X-\Sbep[X]\big)\big(Y-\Sbep[Y]\big)\right]= 0 \;\; (\text{or } \le 0),$
even to get a good estimate of the second order moment $\Sbep\Big[\big(\sum_{k=1}^n\big(X_k-\Sbep[X_k]\big)\big)^2\Big]$ is  not a trivial work. As for the probability inequalities or moment inequalities for the   maximum partial sums $\max_{k\le n} S_k$, in the classical probability space, the proof  depends basically on the additivity of the probabilities and the expectations. For example, the integral on the event $\{\max_{i\le n} S_i\ge x\}$ is usually split to integrals on $\{\max_{i\le k} S_i< x, S_k\ge x\}$, $k=1,\ldots,n$.  The methods based on the additivity can not be used   under the framework of sub-linear expectations. Other popular techniques such as the symmetrization, the martingale method and the stopping time method   are  also not available under the sub-linear expectations because they are essentially based on the additivity property.  The main purpose of this paper is to establish the moment inequalities for  $\max_{k\le n} S_k$ which can be applied to truncated random variables freely. To explain our main idea, we first give the following result on  Kolmogorov's  inequality.

\begin{theorem}\label{th1} ({\em  Kolmogorov's  inequality}) Let $\{X_1,\ldots, X_n\}$
be a sequence  of random variables in $(\Omega, \mathscr{H}, \Sbep)$ with $\Sbep[X_k]=0$, $k=1,\ldots, n$.
Suppose that $X_k$ is independent to $(X_{k+1},\ldots, X_n)$ for each $k=1,\ldots, n-1$. Denote $S_k=X_1+\cdots+X_k$, $S_0=0$. Then
\begin{equation}\label{eqth1.1}
\Sbep\big[\big(\max_{k\le n}S_k\big)^2\big]\le \sum_{k=1}^n \Sbep[X_k^2].
\end{equation}
In particular,
$$ \Sbep\big[\big(S_n^+\big)^2\big]\le \sum_{k=1}^n \Sbep[X_k^2].$$
\end{theorem}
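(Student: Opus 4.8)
The plan is to argue by induction on $n$, peeling off the \emph{first} variable $X_1$ and exploiting the fact that it is independent to all the later ones. I would write $M_n:=\max_{0\le k\le n}S_k$, which is $\ge 0$ because the index $k=0$ contributes $S_0=0$, and establish the bound $\Sbep[M_n^2]\le\sum_{k=1}^n\Sbep[X_k^2]$, from which both displayed inequalities follow. The starting observation is that $X_1$ enters every sum $S_1,\dots,S_n$ additively, so that
$$ M_n=\big(X_1+W\big)^+, \qquad W:=\max_{1\le k\le n}\big(X_2+\cdots+X_k\big), $$
where the summand for $k=1$ is the empty sum $0$. Thus $W\ge 0$, and, crucially, $W$ is a function of $(X_2,\dots,X_n)$ alone; in fact $W$ is exactly the analogue of $M_{n-1}$ for the shorter sequence $(X_2,\dots,X_n)$, namely its running maximum including the initial $0$. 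Since $(x^+)^2\le x^2$ and $\Sbep$ is monotone, it would suffice to estimate $\Sbep[(X_1+W)^2]$.

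The key one-step estimate would go as follows. Because $X_1$ is independent to $(X_2,\dots,X_n)$ and $W=W(X_2,\dots,X_n)$, the definition of independence gives
$$ \Sbep\big[(X_1+W)^2\big]=\Sbep\big[\,\Sbep[(X_1+w)^2]\big|_{w=W}\,\big]. $$
For a fixed constant $w\ge 0$ I would expand and use constant preservation, sub-additivity and positive homogeneity:
$$ \Sbep[(X_1+w)^2]=\Sbep\big[X_1^2+2wX_1\big]+w^2\le \Sbep[X_1^2]+2w\,\Sbep[X_1]+w^2=\Sbep[X_1^2]+w^2, $$
where the last equality uses $\Sbep[X_1]=0$. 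This is the step where the sign matters: the identity $\Sbep[2wX_1]=2w\,\Sbep[X_1]$ requires $w\ge 0$, and it is precisely the nonnegativity of the running maximum $W$ that lets the cross term vanish through $\Sbep[X_1]=0$ rather than through the uncontrolled lower expectation $\cSbep[X_1]$. Substituting $w=W\ge 0$ and applying monotonicity once more would yield
$$ \Sbep\big[(X_1+W)^2\big]\le \Sbep\big[\Sbep[X_1^2]+W^2\big]=\Sbep[X_1^2]+\Sbep[W^2]. $$

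It then remains to run the induction. The residual sequence $(X_2,\dots,X_n)$ inherits all the hypotheses ($\Sbep[X_k]=0$, and $X_k$ independent to $(X_{k+1},\dots,X_n)$ for $2\le k\le n-1$), and $W$ is its running maximum including the initial zero, so the inductive hypothesis gives $\Sbep[W^2]\le\sum_{k=2}^n\Sbep[X_k^2]$; combining, $\Sbep[M_n^2]\le\Sbep[(X_1+W)^2]\le\Sbep[X_1^2]+\sum_{k=2}^n\Sbep[X_k^2]=\sum_{k=1}^n\Sbep[X_k^2]$, with base case $n=1$ being $\Sbep[(X_1^+)^2]\le\Sbep[X_1^2]$. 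Finally $S_n^+=\max(0,S_n)\le M_n$ gives $(S_n^+)^2\le M_n^2$ and hence the ``in particular'' clause. The only points needing care are the admissibility of the conditioning identity, which is guaranteed since $(\max_k S_k)^2$ is a $C_{l,Lip}$ function of $(X_1,\dots,X_n)$ by the Remark and since one may assume every $\Sbep[X_k^2]<\infty$ (otherwise the right-hand side is $+\infty$ and there is nothing to prove), so all relevant moments are finite. I expect the genuine conceptual obstacle to be exactly the decision to peel $X_1$ rather than $X_n$: only the leading variable carries a nonnegative companion $W$, and it is this nonnegativity that lets positive homogeneity convert $\Sbep[X_1]=0$ into the vanishing of the cross term, sidestepping the additivity-based maximal-inequality arguments that fail under $\Sbep$.
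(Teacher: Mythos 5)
Your proposal is correct and follows essentially the same route as the paper: your decomposition $M_n=(X_1+W)^+$ with the nonnegative tail maximum $W$ is precisely the paper's identity $T_k=X_k+T_{k+1}^+$ for $T_k=\max\big(X_k,X_k+X_{k+1},\ldots,X_k+\cdots+X_n\big)$, and your one-step estimate $\Sbep[(X_1+W)^2]\le \Sbep[X_1^2]+\Sbep[W^2]$ (cross term killed by independence, $\Sbep[X_1]=0$, and positive homogeneity applied at a nonnegative value) is exactly the paper's recursion $\Sbep[T_k^2]\le \Sbep[X_k^2]+\Sbep[T_{k+1}^2]$, the paper invoking its product identity (\ref{eq1.1}) where you unfold Peng's conditioning definition directly. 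Note also that since $\max_{1\le k\le n}S_k=X_1+W$ exactly, your key estimate already yields the bound for $\Sbep\big[\big(\max_{1\le k\le n}S_k\big)^2\big]$, which is the form the paper actually proves, and not merely for its positive part $M_n$.
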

\begin{proof}  Set $T_k=\max\big(X_k,X_k+X_{k+1},\ldots, X_k+\cdots+X_n\big)$. Then $T_k, T_k^+\in \mathscr{H}$, and $T_k=X_k+T_{k+1}^+$, $T_k^2=X_k^2+2X_kT_{k+1}^+ +(T_{k+1}^+)^2$. It follows that
$$ \Sbep [T_k^2]\le \Sbep[X_k^2]+2\Sbep[X_k T_{k+1}^+]+\Sbep[(T_{k+1}^+)^2]. $$
Note $\Sbep[X_k T_{k+1}^+]=0$ by (\ref{eq1.1}). We conclude that
$$ \Sbep [T_k^2]\le \Sbep[X_k^2]+ \Sbep[(T_{k+1}^+)^2]\le  \Sbep[X_k^2]+ \Sbep[T_{k+1}^2]. $$
Hence
$ \Sbep [T_1^2]\le \sum_{k=1}^n \Sbep[X_k^2]. $
The proof is completed.
\end{proof}

In the above proof, the independence is utilized to get $\Sbep[X_k T_{k+1}^+]\le 0$ and so can be weakened. Recall that in the probability
$(\Omega, \mathcal F, \pr)$, two random vectors $\bm Y =
(Y_1, \ldots, Y_n)$  and $\bm X =
(X_1, \ldots, X_m)$ are said to be negatively dependent  if for each pair of coordinatewise nondecreasing (resp. non-increasing) functions
 $\varphi_1(\bm x)$ and $\varphi_2(\bm y)$ we have
$$ E_{\pr} [\varphi_1(\bm X)\varphi_2(\bm Y )] \le  E_{\pr} [\varphi_1(\bm X)] E_{\pr}[\varphi_2(\bm Y )]$$
whenever the expectations considered exist.

We introduce the concept of negative dependence under the sub-linear expectation.

 \begin{definition} ({\bf Negative dependence})  In a sub-linear expectation space  $(\Omega, \mathscr{H}, \Sbep)$, a random vector $\bm Y =
(Y_1, \ldots, Y_n)$, $Y_i \in \mathscr{H}$ is said to be negatively dependent to another random vector $\bm X =
(X_1, \ldots, X_m)$, $X_i \in \mathscr{H}$ under $\Sbep$  if for each pair of   test functions $\varphi_1\in C_{l,Lip}(\mathbb R_m)$ and
$\varphi_2\in C_{l,Lip}(\mathbb R_n)$
we have
$$ \Sbep [\varphi_1(\bm X)\varphi_2(\bm Y )] \le  \Sbep [\varphi_1(\bm X)]\Sbep[\varphi_2(\bm Y )]$$
whenever $\varphi_1(\bm X)\ge 0$, $\Sbep[\varphi_2(\bm Y )]\ge 0$, $\Sbep [|\varphi_1(\bm X)\varphi_2(\bm Y )|]<\infty$,
$\Sbep [|\varphi_1(\bm X)|]<\infty$,
  $\Sbep [|\varphi_2(\bm Y )|]<\infty$,
   and either $\varphi_1, \varphi_2$ are coordinatewise nondecreasing or $\varphi_1, \varphi_2$ are coordinatewise non-increasing.
\end{definition}

By the definition, it is easily seen that, if $\bm Y =
(Y_1, \ldots, Y_n)$  is   negatively dependent to $\bm X=(X_1, \ldots, X_m)$, $\varphi_1\in C_{l,Lip}(\mathbb R_m)$ and
$\varphi_2\in C_{l,Lip}(\mathbb R_n)$ are coordinatewise nondecreasing (resp.  non-increasing) functions, then
$\varphi_2(\bm Y )$ is negatively dependent to $\varphi_1(\bm X)$.  Further, if $Y\in \mathscr{H}$ is negatively dependent to $X\in \mathscr{H}$
and $X\ge 0$, $\Sbep[X]<\infty$, $\Sbep[|Y|]<\infty$, $\Sbep[Y]\le 0$, then
$$ \Sbep[YX]\le \Sbep\big[(Y-\Sbep[Y])X\big]+\Sbep\big[\Sbep[Y]X\big]\le \Sbep\big[ Y-\Sbep[Y] \big]\Sbep[X]\le 0. $$

It is obvious that, if $\bm Y$ is independent to $\bm X$, then $\bm Y$ is negatively dependent to $\bm X$. The following is the classical example introduced by
Huber and Strassen\cite{HuberStrassen1973}.

\begin{example} \label{example1} Let $\mathcal P$ be a family of probability measures defined on $(\Omega, \mathcal F)$. For any random variable $\xi$, we denote the upper expectation by
$$ \Sbep[\xi]=\sup_{Q\in \mathcal P}\ep_Q[\xi]. $$
Then $\Sbep[\cdot]$ is a sub-linear expectation. Moreover, if $\bm X$ and $\bm Y$ are independent under each $Q\in \mathcal P$, then
$\bm Y$ is negatively dependent to $\bm X$ under $\Sbep$. In fact,
\begin{align*}
 \Sbep [\varphi_1(\bm X)\varphi_2(\bm Y )]
=& \sup_{Q\in \mathcal P}\ep_Q [\varphi_1(\bm X)\varphi_2(\bm Y )] =\sup_{Q\in \mathcal P}\ep_Q [\varphi_1(\bm X)]\ep_Q[\varphi_2(\bm Y )] \\
\le & \sup_{Q\in \mathcal P}\ep_Q [\varphi_1(\bm X)]\sup_{Q\in \mathcal P}\ep_Q [\varphi_2(\bm Y )]=  \Sbep [\varphi_1(\bm X)]\Sbep[\varphi_2(\bm Y )]
\end{align*}
whenever $\varphi_1(\bm X)\ge 0$ and $\Sbep[\varphi_2(\bm Y )]\ge 0$.

However, $\bm Y$ may be not independent to $\bm X$.

With the similar argument, we can show that $\bm Y$ is negatively dependent to $\bm X$ under $\Sbep$ if $\bm X$ and $\bm Y$ are negatively dependent under each $Q\in \mathcal P$.
\end{example}

According to its proof, the conclusion of Theorem~\ref{th1} remains true under the concept of negative dependence.
\begin{corollary} Let $\{X_1,\ldots, X_n\}$ be a sequence  of random variables in $(\Omega, \mathscr{H}, \Sbep)$ with $\Sbep[X_k]\le 0$, $k=1,\ldots, n$. Suppose that $X_k$ is negatively dependent to $(X_{k+1},\ldots, X_n)$ for each $k=1,\ldots, n-1$.  Then (\ref{eqth1.1}) holds.
\end{corollary}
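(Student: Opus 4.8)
The plan is to run the proof of Theorem~\ref{th1} essentially unchanged, replacing the single place where full independence was used by an appeal to negative dependence. First I would dispose of the trivial case: if $\Sbep[X_k^2]=\infty$ for some $k$, then the right-hand side of (\ref{eqth1.1}) equals $+\infty$ and the inequality holds automatically, so I may assume $\Sbep[X_k^2]<\infty$ for every $k$. Using $T_{k+1}^+\le\sum_{j=k+1}^n|X_j|$ together with sub-additivity, positive homogeneity and the Cauchy--Schwarz inequality under $\Sbep$, this assumption makes $\Sbep[(T_{k+1}^+)^2]$, $\Sbep[|X_k|]$, $\Sbep[T_{k+1}^+]$ and $\Sbep[|X_k T_{k+1}^+|]$ all finite, so that every functional below is well defined and no $\infty-\infty$ ambiguity appears in the sub-additive splittings.

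Next I would copy the recursive scheme of Theorem~\ref{th1}: set $T_k=\max\big(X_k,X_k+X_{k+1},\ldots,X_k+\cdots+X_n\big)$, observe $T_k=X_k+T_{k+1}^+$, expand $T_k^2=X_k^2+2X_kT_{k+1}^++(T_{k+1}^+)^2$, and apply sub-additivity to get
$$\Sbep[T_k^2]\le\Sbep[X_k^2]+2\Sbep[X_k T_{k+1}^+]+\Sbep[(T_{k+1}^+)^2].$$

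The only new ingredient---and the crux of the argument---is to show $\Sbep[X_k T_{k+1}^+]\le0$, which in Theorem~\ref{th1} was the equality $\Sbep[X_k T_{k+1}^+]=0$ supplied by (\ref{eq1.1}). Here I would note that $T_{k+1}=\max\big(X_{k+1},\ldots,X_{k+1}+\cdots+X_n\big)$ is a coordinatewise nondecreasing, locally Lipschitz function of $(X_{k+1},\ldots,X_n)$, and so is $T_{k+1}^+$ after composing with the nondecreasing Lipschitz map $t\mapsto t^+$; hence $T_{k+1}^+\in\mathscr{H}$ is a nondecreasing function of the vector to which $X_k$ is negatively dependent. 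By the stability property recorded just after the definition of negative dependence, $X_k$ is then negatively dependent to $T_{k+1}^+\ge0$, and the displayed computation following that definition---applied with $X=T_{k+1}^+\ge0$, $Y=X_k$ and $\Sbep[X_k]\le0$---yields exactly $\Sbep[X_k T_{k+1}^+]\le0$.

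Inserting this bound gives $\Sbep[T_k^2]\le\Sbep[X_k^2]+\Sbep[(T_{k+1}^+)^2]\le\Sbep[X_k^2]+\Sbep[T_{k+1}^2]$; iterating down to $k=1$ produces $\Sbep[T_1^2]\le\sum_{k=1}^n\Sbep[X_k^2]$, and the final identification $\Sbep\big[(\max_{k\le n}S_k)^2\big]\le\Sbep[T_1^2]$ is the same as in Theorem~\ref{th1}. The main obstacle is not any single inequality but the careful bookkeeping of the negative-dependence hypotheses: one must verify that $T_{k+1}^+$ genuinely qualifies as a coordinatewise nondecreasing test function and that the sign and finiteness side-conditions in the definition (namely $T_{k+1}^+\ge0$, $\Sbep[X_k]\le0$, and finiteness of the relevant expectations) are met at each step, since negative dependence---unlike independence---is a one-sided, order-sensitive property and cannot be used symmetrically.
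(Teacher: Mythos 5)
Your proof is correct and follows essentially the same route as the paper: the paper's own justification of this corollary is precisely to rerun the proof of Theorem~\ref{th1}, replacing the identity $\Sbep[X_k T_{k+1}^+]=0$ from (\ref{eq1.1}) by the bound $\Sbep[X_k T_{k+1}^+]\le 0$, obtained exactly as you do from the monotonicity/stability remark and the displayed computation $\Sbep[YX]\le\Sbep[(Y-\Sbep[Y])X]+\Sbep[\Sbep[Y]X]\le 0$ following the definition of negative dependence. Your additional bookkeeping of the finiteness side-conditions (and the trivial case $\Sbep[X_k^2]=\infty$) is sound and only makes explicit what the paper leaves implicit.
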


Our basic idea for obtaining Theorem \ref{th1} comes from Newman and Wright \cite{NewmanWright1981} and Matula\cite{Matula1992} where
 Kolmogorov's  inequality is estiblished for the classical positively and negatively dependent random variables respectively.


 \section{Rosenthal's inequalities}\label{secction2}

In this section, we extend Kolmogorov's  inequality to Rosenthal's inequalities.
For  moment  inequalities of partial sums of the classical negatively dependent random variables and related strong limit theorems,
one can refer to Shao\cite{Shao2000},  Su, Zhao and Wang \cite{SuZhaoWang1997},   Yuan  and An\cite{YuanAn2009},  Zhang\cite{Zhang2000}\cite{Zhang2001a}\cite{Zhang2001b},   Zhang and Wen\cite{ZhangWen2001} etc.
 Some   techniques  from these papers  will be used in the lines
of our  proofs. We let $\{X_1,\ldots, X_n\}$ be a sequence  of random variables in
 $(\Omega, \mathscr{H}, \Sbep)$, and denote $S_k=X_1+\ldots+ X_k$, $S_0=0$.

\begin{theorem}\label{th2} ({\em Rosnethal's inequality})
 (a) Suppose   that $X_k$ is negatively dependent to $(X_{k+1},\ldots, X_n)$ for each $k=1,\ldots, n-1$, and  $\Sbep[X_k]\le 0$, $k=1,\ldots, n$.  Then
\begin{equation}\label{eqth2.1}
\Sbep\left[\left|\max_{k\le n} S_k\right|^p\right]\le 2^{2-p}\sum_{k=1}^n \Sbep [|X_k|^p], \;\; \text{ for } 1\le p\le 2
\end{equation}
and
\begin{equation}\label{eqth2.1.2}
\Sbep\left[\left|\max_{k\le n} S_k\right|^p\right]\le C_p n^{p/2-1}\sum_{k=1}^n \Sbep [|X_k|^p], \;\; \text{ for }   p\ge 2.
\end{equation}

(b) Suppose  that $X_k$ is independent to $(X_{k+1},\ldots, X_n)$ for each $k=1,\ldots, n-1$, and  $\Sbep[X_k]\le 0$, $k=1,\ldots, n$. Then
\begin{equation}\label{eqth2.2}
\Sbep\left[\left|\max_{k\le n} S_k\right|^p\right]\le C_p\left\{ \sum_{k=1}^n \Sbep [|X_k|^p]+\left(\sum_{k=1}^n \Sbep [|X_k|^2]\right)^{p/2}\right\}, \;\; \text{ for }   p\ge 2.
\end{equation}

(c) In general, suppose   that $X_k$ is negatively dependent to $(X_{k+1},\ldots, X_n)$ for each $k=1,\ldots, n-1$,
 or $X_{k+1}$ is negatively dependent to $(X_1,\ldots, X_k)$ for each $k=1,\ldots, n-1$.  Then
\begin{align}\label{eqth2.3}
\Sbep\left[\max_{k\le n} \left|S_k\right|^p\right]\le  & C_p\left\{ \sum_{k=1}^n \Sbep [|X_k|^p]+\left(\sum_{k=1}^n \Sbep [|X_k|^2]\right)^{p/2} \right. \nonumber
 \\
& \qquad \left. +\left(\sum_{k=1}^n \big[\big(\cSbep [X_k]\big)^-+\big(\Sbep [X_k]\big)^+\big]\right)^{p}\right\}.
\end{align}
Here  $ C_p$ is a positive constant depending only on $p$.
\end{theorem}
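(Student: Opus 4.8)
The plan is to base the entire argument on the backward maximal array from the proof of Theorem~\ref{th1}. Put $T_k=\max(X_k,X_k+X_{k+1},\dots,X_k+\cdots+X_n)$, so that $T_{n+1}=0$, $T_k=X_k+T_{k+1}^+$ and $T_1=\max_{k\le n}S_k$. Expanding the convex function $g(x)=(x^+)^p$ around $T_{k+1}^+\ge 0$ with increment $X_k$ gives, for $p\ge 2$, an elementary inequality of the form
$$(T_k^+)^p\le (T_{k+1}^+)^p+p(T_{k+1}^+)^{p-1}X_k+C_p\big[(T_{k+1}^+)^{p-2}X_k^2+|X_k|^p\big],$$
and, for $1\le p\le 2$, the sharper $(T_k^+)^p\le (T_{k+1}^+)^p+p(T_{k+1}^+)^{p-1}X_k+2^{2-p}|X_k|^p$ with no second-order term. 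In both regimes the first-order term is killed by negative dependence: $(T_{k+1}^+)^{p-1}$ is a nonnegative, coordinatewise nondecreasing function of $(X_{k+1},\dots,X_n)$, so $X_k$ is negatively dependent to it, and since $\Sbep[X_k]\le 0$ the inequality $\Sbep[YX]\le 0$ recorded after the definition of negative dependence gives $\Sbep[(T_{k+1}^+)^{p-1}X_k]\le 0$. Taking $\Sbep$, using sub-additivity, and telescoping from $k=n$ to $k=1$ then drives the recursion. (For $1\le p\le 2$ the factor $(x^+)^{p-1}$ is only H\"older at the origin, so one first replaces it by a $C_{l,Lip}$ approximant and passes to the limit.)

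Part (a) with $1\le p\le 2$ is then immediate: only the terms $2^{2-p}\Sbep[|X_k|^p]$ survive the telescoping, and passing from $(T_1^+)^p$ to $|\max_{k\le n}S_k|^p$ (controlled by $(T_1^+)^p$ together with a single $|X_1|^p$ since $T_1\ge X_1$) yields \eqref{eqth2.1}. For $p\ge 2$ I would estimate the surviving second-order term by H\"older's inequality, $\Sbep[(T_{k+1}^+)^{p-2}X_k^2]\le(\Sbep[(T_{k+1}^+)^p])^{(p-2)/p}(\Sbep[|X_k|^p])^{2/p}$, and then induct on $n$: the inductive hypothesis bounds each $\Sbep[(T_{k+1}^+)^p]$ by $C_pn^{p/2-1}\sum_j\Sbep[|X_j|^p]$, and the power-mean inequality handles $\sum_k(\Sbep[|X_k|^p])^{2/p}$. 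The only thing to verify is the exponent bookkeeping, which collapses exactly, $\tfrac{p-2}{2}\cdot\tfrac{p-2}{p}+1-\tfrac{2}{p}=\tfrac{p}{2}-1$, giving \eqref{eqth2.1.2}.

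Part (b) runs on the identical recursion; the decisive difference is that full independence lets me \emph{factor} the cross term via \eqref{eq1.1}, $\Sbep[(T_{k+1}^+)^{p-2}X_k^2]=\Sbep[(T_{k+1}^+)^{p-2}]\,\Sbep[X_k^2]$, followed by $\Sbep[(T_{k+1}^+)^{p-2}]\le(\Sbep[(T_{k+1}^+)^p])^{(p-2)/p}$. Substituting an inductive bound of the target shape $D=C_p\{\sum_j\Sbep[|X_j|^p]+(\sum_j\Sbep[X_j^2])^{p/2}\}$ and using Young's inequality with conjugate exponents $p/(p-2)$ and $p/2$ to absorb the mixed term $(\sum_k\Sbep[|X_k|^p])^{(p-2)/p}\sum_k\Sbep[X_k^2]$ reproduces $D$ up to a constant, which is \eqref{eqth2.2}. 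This factorization is precisely what independence buys over negative dependence: it replaces the crude $n^{p/2-1}$ factor of (a) by the sharp $(\sum_k\Sbep[X_k^2])^{p/2}$.

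For part (c) I would first center. Since $\max_{k\le n}|S_k|^p\le(\max_{k\le n}S_k^+)^p+(\max_{k\le n}S_k^-)^p$, I replace $X_k$ by $X_k-(\Sbep[X_k])^+$ for the first piece and $-X_k$ by $-X_k-(\cSbep[X_k])^-$ for the second, which restores the one-sided mean condition; the dependence structure and the bound $\Sbep[|X_k-(\Sbep[X_k])^+|^p]\le C_p\Sbep[|X_k|^p]$ survive these constant shifts, so the centering constants accumulate into the term $\big(\sum_k[(\cSbep[X_k])^-+(\Sbep[X_k])^+]\big)^p$ while the centered maxima are treated by the recursion above. The hard part, which I expect to absorb most of the effort, is the second-order cross term under merely \emph{one-directional} negative dependence, where the clean factorization of part (b) is unavailable: one must split $X_k^2=(X_k^+)^2+(X_k^-)^2$, factor the comonotone piece $(X_k^+)^2$ against $(T_{k+1}^+)^{p-2}$ by negative dependence, and control the anti-monotone piece $(X_k^-)^2$ using the alternative dependence direction permitted in the hypothesis together with a maximal passage from $\Sbep[|S_n|^p]$ to $\Sbep[\max_{k\le n}|S_k|^p]$. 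Collecting all estimates and the $p$-dependent constants then yields \eqref{eqth2.3}.
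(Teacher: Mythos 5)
Your treatment of parts (a) and (b) is essentially the paper's own proof: the same backward array $T_k=\max(X_k,X_k+X_{k+1},\ldots,X_k+\cdots+X_n)$, the same elementary inequalities with first- and second-order terms, the cross term $\Sbep[X_k(T_{k+1}^+)^{p-1}]$ killed by negative dependence (resp.\ factored by independence via (\ref{eq1.1})), and a closing step in which your induction on $n$ with Young's inequality is equivalent to the paper's self-bounding inequality $A_n\le a+b\,A_n^{1-2/p}$ for $A_n=\max_{k\le n}\Sbep[|T_k|^p]$. Your remark that $(x^+)^{p-1}$ must be replaced by a $C_{l,Lip}$ approximant for $1<p<2$ is a legitimate point of care on which the paper is silent. (One blemish: running the recursion on $(T_k^+)^p$ and recovering $|T_1|^p$ by adding $|X_1|^p$ loses the exact constant $2^{2-p}$ claimed in (\ref{eqth2.1}); run the recursion on $|T_k|^p$ as the paper does.)

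Part (c), however, has a genuine gap. Your plan splits the cross term as $X_k^2=(X_k^+)^2+(X_k^-)^2$ and factors each piece against $(T_{k+1}^+)^{p-2}$, but negative dependence, as defined, only bounds $\Sbep[\varphi_1\varphi_2]$ by $\Sbep[\varphi_1]\Sbep[\varphi_2]$ when $\varphi_1,\varphi_2$ are monotone \emph{in the same direction}. The piece $(X_k^-)^2$ is non-increasing in $X_k$ while $(T_{k+1}^+)^{p-2}$ is nondecreasing in $(X_{k+1},\ldots,X_n)$, so no factorization of $\Sbep[(X_k^-)^2(T_{k+1}^+)^{p-2}]$ is available from negative dependence, whichever vector is declared dependent on which. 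Moreover, the ``alternative dependence direction'' you invoke does not exist: the hypothesis of (c) is a disjunction (forward negative dependence for every $k$, \emph{or} backward negative dependence for every $k$), not a conjunction, so under the first alternative there is no backward structure to exploit, and your ``maximal passage'' remark is not an argument.

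The missing idea — the paper's actual mechanism — is to not factor the second-order term at all. One bounds $T_{k+1}^+\le 2\widetilde T_1$ pointwise, where $\widetilde T_1=\max_{k\le n}|S_k|$, and applies H\"older's inequality to get $\Sbep\big[\sum_k X_k^2(T_{k+1}^+)^{p-2}\big]\le 2^{p-2}\big[\Sbep\big(\sum_k X_k^2\big)^{p/2}\big]^{2/p}\big(\Sbep[\widetilde T_1^p]\big)^{1-2/p}$, leaving the quantity $\Sbep\big(\sum_k X_k^2\big)^{p/2}$ intact; together with the mean terms $(\Sbep[X_k])^+,(\cSbep[X_k])^-$ this yields the Marcinkiewicz--Zygmund-type bound (\ref{eqMZ}). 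Then, in a separate second step, $\Sbep\big(\sum_k X_k^2\big)^{p/2}$ is reduced to $C_p\{(\sum_k\Sbep[X_k^2])^{p/2}+\sum_k\Sbep[|X_k|^p]\}$ by applying the already-proved inequality (\ref{eqth2.1}) (when $2\le p\le 4$), and the statement of (c) itself inductively over the dyadic ranges $2^l<p\le 2^{l+1}$, to the sequences $\{(X_k^+)^2\}$ and $\{(X_k^-)^2\}$ — which are again negatively dependent, since $x\mapsto(x^+)^2$ is nondecreasing and $x\mapsto(x^-)^2$ is non-increasing — combined with the interpolation $\Sbep[X_k^4]\le(\Sbep[X_k^2])^{\frac{p-4}{p-2}}(\Sbep[|X_k|^p])^{\frac{2}{p-2}}$. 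Your centering device for removing the mean condition is fine, but without this recursion on the exponent applied to the squared sequences, (\ref{eqth2.3}) is not reachable by your direct recursion.
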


If we consider the sequence $\{X_1,X_2,\ldots, X_n\}$ in the reverse order as $\{X_n,X_{n-1},\ldots, X_1\}$, by Theorem~\ref{th2} (a) and (b) we have the following corollary.
\begin{corollary}\label{cor2}   Let $\{X_1,\ldots, X_n\}$ be a sequence  of random variables in $(\Omega, \mathscr{H}, \Sbep)$
with $\Sbep[X_k]\le 0$, $k=1,\ldots, n$.

(a)  Suppose that $X_{k+1}$ is negatively dependent to $(X_1,\ldots, X_k)$ for each $k=1,\ldots, n-1$.  Then
\begin{equation}\label{eqcor2.1}
\Sbep\left[\left|\max_{k\le n} (S_n-S_k)\right|^p\right]\le 2^{2-p}\sum_{k=1}^n \Sbep [|X_k|^p], \;\; \text{ for } 1\le p\le 2
\end{equation}
and
\begin{equation}\label{eqcor2.2}
\Sbep\left[\left|\max_{k\le n}(S_n- S_k)\right|^p\right]\le C_p n^{p/2-1} \sum_{k=1}^n \Sbep [|X_k|^p], \;\; \text{ for }   p\ge 2.
\end{equation}
In particular,
\begin{equation}\label{eqcor2.3}
\Sbep\left[\left( S_n^+\right)^p\right]\le \begin{cases} 2^{2-p}\sum_{k=1}^n \Sbep [|X_k|^p], &  \text{ for } 1\le p\le 2, \\
C_p  n^{p/2-1} \sum_{k=1}^n \Sbep [|X_k|^p], &  \text{ for }  p\ge 2.
\end{cases}
\end{equation}

(b)  Suppose that $X_{k+1}$ is independent to $(X_1,\ldots, X_k)$ for each $k=1,\ldots, n-1$.  Then
\begin{equation}\label{eqcor2.2.1}
\Sbep\left[\left|\max_{k\le n}(S_n- S_k)\right|^p\right]\le C_p\left\{ \sum_{k=1}^n \Sbep [|X_k|^p]+\left(\sum_{k=1}^n \Sbep [|X_k|^2]\right)^{p/2}\right\}, \;\; \text{ for }   p\ge 2.
\end{equation}
In particular,
$$
\Sbep\left[\left( S_n^+\right)^p\right]\le C_p\left\{ \sum_{k=1}^n \Sbep [|X_k|^p]+\left(\sum_{k=1}^n \Sbep [|X_k|^2]\right)^{p/2}\right\}, \;\; \text{ for }   p\ge 2.
$$
\end{corollary}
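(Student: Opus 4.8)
The plan is to obtain the corollary directly from Theorem~\ref{th2} by applying that theorem to the time-reversed sequence, exactly as the sentence preceding the statement suggests. I would set $Y_j:=X_{n+1-j}$ for $j=1,\ldots,n$, so that $\{Y_1,\ldots,Y_n\}$ is the list $\{X_n,X_{n-1},\ldots,X_1\}$, and write $\widetilde S_j:=Y_1+\cdots+Y_j$ for its partial sums. The first, purely algebraic, step is the identity $\widetilde S_j=X_n+X_{n-1}+\cdots+X_{n+1-j}=S_n-S_{n-j}$, which gives $\max_{1\le j\le n}\widetilde S_j=\max_{0\le k\le n-1}(S_n-S_k)$. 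The moment hypothesis transfers trivially, since $\Sbep[Y_j]=\Sbep[X_{n+1-j}]\le 0$, and the right-hand sides are unchanged because $\sum_{j=1}^n\Sbep[|Y_j|^p]=\sum_{k=1}^n\Sbep[|X_k|^p]$ (and likewise for the sums of second moments).

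Next I would verify that the dependence hypothesis of Theorem~\ref{th2}(a) (resp.\ (b)) holds for $\{Y_1,\ldots,Y_n\}$. Writing $j=n-k$, the assumption that ``$X_{k+1}$ is negatively dependent to $(X_1,\ldots,X_k)$'' for $k=1,\ldots,n-1$ becomes ``$Y_j$ is negatively dependent to $(Y_n,Y_{n-1},\ldots,Y_{j+1})$'' for $j=1,\ldots,n-1$. To match the hypothesis of Theorem~\ref{th2}(a) I must observe that negative dependence to a random vector is unaffected by a permutation of that vector's coordinates: the defining inequality ranges over all $\varphi_2\in C_{l,Lip}(\mathbb R_{n-j})$, and this class is closed under permutation of its arguments, so reordering the conditioning vector into $(Y_{j+1},\ldots,Y_n)$ changes nothing. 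The identical remark, applied to the definition of independence, handles the independence hypothesis in part~(b).

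With the hypotheses in place I would apply Theorem~\ref{th2}(a) to $\{Y_1,\ldots,Y_n\}$ to read off \eqref{eqcor2.1} and \eqref{eqcor2.2}, and Theorem~\ref{th2}(b) to read off \eqref{eqcor2.2.1}, using the identity above to translate $\max_{1\le j\le n}\widetilde S_j$ into $\max_{k\le n}(S_n-S_k)$. A small point is that the range of the maximum in the corollary and the one produced by the theorem can differ only in the degenerate term $S_n-S_n=0$; since the quantity is taken in absolute value and raised to the $p$-th power, an elementary monotonicity check gives $\big|\max_{k\le n}(S_n-S_k)\big|^p\le\big|\max_{1\le j\le n}\widetilde S_j\big|^p$, so the bound is preserved under either indexing convention. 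For the ``in particular'' clauses \eqref{eqcor2.3} and its analogue, I would note $S_n=\widetilde S_n\le\max_{1\le j\le n}\widetilde S_j$, whence $(S_n^+)^p\le\big|\max_{1\le j\le n}\widetilde S_j\big|^p$; this is the reduction analogous to the one already used in Theorems~\ref{th1} and~\ref{th2} to pass from the maximal inequality to the bound on $S_n^+$.

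The computations are routine; the only step that genuinely needs to be spelled out is the invariance of the negative-dependence (resp.\ independence) relation under reordering the coordinates of the conditioning vector, together with the bookkeeping that reconciles the two indexing conventions for the maxima. I therefore expect this verification to be the main obstacle, even though it is not deep.
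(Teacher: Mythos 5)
Your proposal is correct and takes essentially the same route as the paper: the paper's entire proof of Corollary~\ref{cor2} is the one-sentence remark that one applies Theorem~\ref{th2}(a) and (b) to the sequence in reverse order $\{X_n,X_{n-1},\ldots,X_1\}$, which is precisely your construction $Y_j=X_{n+1-j}$. The points you single out as needing verification --- the permutation-invariance of the negative-dependence/independence hypotheses and the bookkeeping reconciling $\max_{1\le j\le n}\widetilde S_j$ with $\max_{k\le n}(S_n-S_k)$ and with $(S_n^+)^p$ --- are left implicit in the paper, and your treatment of them is sound.
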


For the moments under $\cSbep$, we have the following estimates.
\begin{theorem}\label{th2c}    Let $\{X_1,\ldots, X_n\}$ be a sequence  of  random variables in
 $(\Omega, \mathscr{H}, \Sbep)$ with $\cSbep[X_k]\le 0$, $k=1,\ldots, n$, and $1\le p\le 2$.
  If $X_k$ is  independent to $(X_{k+1},\ldots, X_n)$ for each $k=1,\ldots, n-1$,  then
\begin{equation}\label{eqth2c.1}
\cSbep\left[\left|\max_{k\le n} S_k\right|^p\right]\le 2^{2-p}\sum_{k=1}^n \Sbep [|X_k|^p], \;\; \text{ for } 1\le p\le 2.
\end{equation}
If $X_{k+1}$ is  independent to $(X_1,\ldots, X_k)$ for each $k=1,\ldots, n-1$, then
\begin{equation}\label{eqth2c.2}
\cSbep\left[\left|\max_{k\le n} (S_n-S_k)\right|^p\right]\le 2^{2-p}\sum_{k=1}^n \Sbep [|X_k|^p], \;\; \text{ for } 1\le p\le 2.
\end{equation}
\end{theorem}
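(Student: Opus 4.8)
The plan is to reproduce the recursive argument behind Theorem~\ref{th1} and Theorem~\ref{th2}(a), but to run the entire induction at the level of the conjugate expectation $\cSbep$ rather than $\Sbep$. One should first observe that the trivial bound $\cSbep[\,\cdot\,]\le\Sbep[\,\cdot\,]$ combined with Theorem~\ref{th2}(a) does \emph{not} suffice: Theorem~\ref{th2}(a) requires $\Sbep[X_k]\le 0$, whereas here we are only given the strictly weaker hypothesis $\cSbep[X_k]\le 0$. So the lower-expectation bound has to be proved on its own. As before, set $T_k=\max(X_k,X_k+X_{k+1},\ldots,X_k+\cdots+X_n)$, so that $T_k,T_k^+\in\mathscr H$, $T_{n+1}=0$, $T_k=X_k+T_{k+1}^+$, and $T_1=\max_{k\le n}S_k$. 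The aim is the one-step estimate $\cSbep[|T_k|^p]\le 2^{2-p}\Sbep[|X_k|^p]+\cSbep[|T_{k+1}|^p]$, which on iterating from $k=1$ to $n$ yields \eqref{eqth2c.1}.

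For the one-step estimate I would invoke the elementary inequality valid for $1\le p\le 2$, $a\ge 0$ and $b\in\mathbb R$,
\[
|a+b|^p\le a^p+p\,a^{p-1}b+2^{2-p}|b|^p,
\]
(the same inequality that drives the proof of Theorem~\ref{th2}(a); at $p=2$ it is an identity). Applying it with $a=T_{k+1}^+\ge 0$ and $b=X_k$ gives pointwise
\[
|T_k|^p\le (T_{k+1}^+)^p+p\,(T_{k+1}^+)^{p-1}X_k+2^{2-p}|X_k|^p.
\]
Taking $\cSbep$ and using the general relation $\cSbep[U+V]\le\cSbep[U]+\Sbep[V]$ to peel off the last term, it remains to prove
\[
\cSbep\big[(T_{k+1}^+)^p+p\,(T_{k+1}^+)^{p-1}X_k\big]\le\cSbep\big[(T_{k+1}^+)^p\big]\le\cSbep\big[|T_{k+1}|^p\big].
\]

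The crux is the first of these inequalities, and this is where independence and the lower expectation must interact. Writing $g=(T_{k+1}^+)^p\ge 0$ and $h=p(T_{k+1}^+)^{p-1}\ge 0$, both coordinatewise nondecreasing functions of the ``future'' $(X_{k+1},\ldots,X_n)$, I would evaluate the conjugate via $\cSbep[g+hX_k]=-\Sbep[-g-hX_k]$ together with the definition of independence of $X_k$ to $(X_{k+1},\ldots,X_n)$. Freezing the future $\bm x'$, the inner expectation of $-g(\bm x')-h(\bm x')X_k$ equals $-g(\bm x')-h(\bm x')\cSbep[X_k]$, because $h(\bm x')\ge 0$ and $\Sbep[-X_k]=-\cSbep[X_k]$. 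Here the hypothesis $\cSbep[X_k]\le 0$ enters with exactly the right sign: the term $-h\,\cSbep[X_k]\ge 0$, so that by $\Sbep[U+V]\ge\Sbep[U]+\cSbep[V]$ (with $V=-h\,\cSbep[X_k]$ and $\cSbep[V]=(-\cSbep[X_k])\,\cSbep[h]\ge 0$) we obtain $\Sbep[-g-hX_k]\ge\Sbep[-g]$, i.e. $\cSbep[g+hX_k]\le\cSbep[g]$. I expect this to be the main obstacle: the naive alternative of splitting the cross term under $\Sbep$ fails, because $\Sbep[X_k]$ may be strictly positive even when $\cSbep[X_k]\le 0$, and the cross term would then carry the wrong sign. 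It is precisely this point that forces the use of genuine independence (so that the inner expectation can actually be computed) rather than mere negative dependence, and explains why the statement is phrased for $\cSbep$ under the condition $\cSbep[X_k]\le 0$.

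Finally, \eqref{eqth2c.2} should follow from \eqref{eqth2c.1} by reversing the order of the sequence: setting $\widetilde X_j=X_{n+1-j}$ turns ``$X_{k+1}$ independent to $(X_1,\ldots,X_k)$'' into ``$\widetilde X_j$ independent to $(\widetilde X_{j+1},\ldots,\widetilde X_n)$'', preserves $\cSbep[\widetilde X_j]\le 0$ and $\Sbep[|\widetilde X_j|^p]=\Sbep[|X_j|^p]$, and identifies $\max_{k\le n}(S_n-S_k)$ with the maximal partial sum of the $\widetilde X_j$; applying \eqref{eqth2c.1} to $\{\widetilde X_j\}$ then gives the claim. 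The regularity and integrability of the intermediate quantities $g,h,T_k^+$ are handled exactly as in the proof of Theorem~\ref{th2}.
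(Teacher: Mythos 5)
Your proposal is correct and follows essentially the same route as the paper: the same recursion on $T_k=X_k+T_{k+1}^+$, the same elementary inequality $|x+y|^p\le 2^{2-p}|x|^p+|y|^p+px|y|^{p-1}\mathrm{sgn}(y)$, the same use of independence to freeze the future variables and reduce the cross term to $\cSbep[X_k]\cdot(T_{k+1}^+)^{p-1}\le 0$, and the same order-reversal argument for \eqref{eqth2c.2}. The only difference is notational — the paper packages the freezing step as a conditional expectation $\cSbep[\,\cdot\,|T_{k+1}^+]$ together with $\cSbep[X+Y]\le\cSbep[X]+\Sbep[Y]$, whereas you unwind it explicitly via $\cSbep[g+hX_k]=-\Sbep[-g-hX_k]$ — which is the identical argument.
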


To prove Theorems~\ref{th2}-\ref{th2c}, we need  H\"older's inequality under the sub-linear expectation which can be proved by the same may under the linear expectation due to the properties of
the monotonicity and sub-additivity (c.f. Proposition 1.16 of Peng\cite{Peng2010}).
 \begin{lemma} ({\em H\"older's inequality}) Let $p,q>1$ be two real numbers satisfying $\frac{1}p+\frac{1}{q}=1$. Then for two random variables  $X,Y$  in $(\Omega, \mathscr{H}, \Sbep)$ we have
 $$ \Sbep[|XY|]\le \left(\Sbep[|X|^p]\right)^{\frac{1}{p}}  \left(\Sbep[|Y|^p]\right)^{\frac{1}{q}}. $$
 \end{lemma}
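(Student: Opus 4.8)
The plan is to mimic the classical derivation of Hölder's inequality, the point being that the only structural facts it uses are monotonicity, sub-additivity and positive homogeneity, all of which are available for $\Sbep$ by Definition~2 (a), (c), (d). Throughout I write the right-hand side as $A B$ with $A=\big(\Sbep[|X|^p]\big)^{1/p}$ and $B=\big(\Sbep[|Y|^q]\big)^{1/q}$ (the exponent on $|Y|$ being read as $q$). I would first dispose of the degenerate cases: if $A=\infty$ or $B=\infty$ the asserted bound is trivial, so the substantive range is $0<A,B<\infty$, together with the boundary case $A=0$ treated at the end.

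Next comes the normalization step. Put $U=|X|/A$ and $V=|Y|/B$. Since $x\mapsto|x|^p$ and $x\mapsto|x|^q$ lie in $C_{l,Lip}(\mathbb R_1)$ for $p,q\ge 1$, and $(x,y)\mapsto|xy|$ lies in $C_{l,Lip}(\mathbb R_2)$, the variables $U^p,\,V^q,\,UV$ all belong to $\mathscr{H}$; and by positive homogeneity (d) one gets $\Sbep[U^p]=A^{-p}\Sbep[|X|^p]=1$ and likewise $\Sbep[V^q]=1$.

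Then I would apply Young's inequality pointwise: since $U,V\ge 0$ and $\tfrac1p+\tfrac1q=1$, we have $UV\le \tfrac1p U^p+\tfrac1q V^q$ everywhere on $\Omega$. Applying $\Sbep$ and invoking monotonicity (a), sub-additivity (c) and positive homogeneity (d),
$$
\Sbep[UV]\le \Sbep\Big[\tfrac1p U^p+\tfrac1q V^q\Big]\le \tfrac1p\Sbep[U^p]+\tfrac1q\Sbep[V^q]=\tfrac1p+\tfrac1q=1.
$$
Undoing the normalization with positive homogeneity once more gives $\Sbep[|XY|]=A B\,\Sbep[UV]\le A B$, which is the claim.

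The main obstacle is the boundary case $A=0$ with $B<\infty$, since without genuine additivity one cannot argue directly that vanishing of $\Sbep[|X|^p]$ forces $|XY|=0$ pointwise. Here I would instead use the parametrized Young inequality $ab\le \tfrac{\lambda^p}{p}a^p+\tfrac{1}{\lambda^q q}b^q$ for $\lambda>0$, apply $\Sbep$ as above to obtain $\Sbep[|XY|]\le \tfrac{\lambda^p}{p}\Sbep[|X|^p]+\tfrac{1}{\lambda^q q}\Sbep[|Y|^q]=\tfrac{1}{\lambda^q q}\Sbep[|Y|^q]$, and let $\lambda\to\infty$ to conclude $\Sbep[|XY|]=0=A B$. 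In fact this parametrized inequality, optimized over $\lambda$ (the minimizer $\lambda^{p+q}=\Sbep[|Y|^q]/\Sbep[|X|^p]$ yields exactly $A B$ because $p+q=pq$), gives the whole statement in one stroke and can replace the normalization argument entirely.
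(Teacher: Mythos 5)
Your proof is correct, and it is essentially the approach the paper intends: the paper gives no written proof, remarking only that H\"older's inequality ``can be proved by the same way under the linear expectation due to the properties of the monotonicity and sub-additivity'' (citing Proposition 1.16 of Peng), which is precisely your normalization-plus-Young argument using axioms (a), (c), (d). Your extra care with the degenerate case $\Sbep[|X|^p]=0$ via the parametrized Young inequality (and your silent correction of the paper's typo $\Sbep[|Y|^p]$ to $\Sbep[|Y|^q]$) goes slightly beyond what the paper records, but does not change the method.
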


{\em Proof of Theorem~\ref{th2}}.   Let $T_k$ be defined as in the proof of Theorem~\ref{th1}.

(a) We first prove (\ref{eqth2.1}).    Substituting $x=X_k$ and $y=T_{k+1}^+$ to the following elementary inequality
\begin{equation}\label{eqproofth2.1.0} |x+y|^p \le 2^{2-p}|x|^p+|y|^p+px|y|^{p-1}\text{sgn}(y), \;\;   1\le p\le 2
\end{equation}
yields
\begin{align*}
\Sbep\left[|T_k|^p\right]\le & 2^{2-p}\Sbep \left[|X_k|^p\right]+\Sbep \left[(T_{k+1}^+)^p\right]+p\Sbep \left[X_k(T_{k+1}^+)^{p-1}\right]\\
\le & 2^{2-p}\Sbep \left[|X_k|^p\right]+\Sbep \left[|T_{k+1}|^p\right]
\end{align*}
by the definition of negative dependence and the facts that $\Sbep \left[X_k\right]\le 0$, $T_{k+1}^+\ge 0$, and $T_{k+1}^+$ is
 a coordinatewise nondecreasing function of $X_{k+1},\ldots, X_n$. Hence
$$ \Sbep\left[|T_1|^p\right]\le   2^{2-p}\sum_{k=1}^{n-1}\Sbep \left[|X_k|^p\right]+\Sbep \left[|X_n|^p\right]. $$
So, (\ref{eqth2.1}) is proved.

For (\ref{eqth2.1.2}), by the   following elementary inequality
$$ |x+y|^p \le 2^{p}p^2|x|^p+|y|^p+px|y|^{p-1}\text{sgn}(y)+2^p p^2 x^2|y|^{p-2}, \;\;    p\ge 2, $$
we have
$$ |T_k|^p\le 2^pp^2|X_k|^p+|T_{k+1}|^p+pX_k(T_{k+1}^+)^{p-1}+2^p p^2X_k^2(T_{k+1}^+)^{p-2}. $$
  It follows that
\begin{equation}\label{eqproofth2.1.1}
 |T_i|^p\le 2^pp^2 \sum_{k=i}^n|X_k|^p+p\sum_{k=i}^{n-1} X_k(T_{k+1}^+)^{p-1}+2^p p^2\sum_{k=i}^{n-1}X_k^2(T_{k+1}^+)^{p-2}.
 \end{equation}
Hence by the definition of  the negative dependence and  H\"older's inequality,
\begin{align*}
\Sbep\left[|T_i|^p\right]\le & 2^pp^2\Sbep \left[\sum_{k=i}^n|X_k|^p\right]+p\sum_{k=i}^{n-1}\Sbep\left[ X_k(T_{k+1}^+)^{p-1}\right]
+2^p p^2 \sum_{k=i}^{n-1}\Sbep\left[ X_k^2 (T_{k+1}^+)^{p-2}\right]\\
\le & 2^pp^2\Sbep \left[\sum_{k=1}^n|X_k|^p\right]
+2^pp^2 \sum_{k=1}^{n-1}\left(\Sbep [|X_k|^p] \right)^{\frac{2}{p}} \left(\Sbep\left[|T_{k+1}|^p\right]\right)^{1-\frac{2}{p}}.
\end{align*}
Let $A_n=\max_{k\le n}\Sbep\left[|T_k|^p\right]$. Then
\begin{align*}
A_n\le
  2^pp^2 \sum_{k=1}^n\Sbep[|X_k|^p]
+2^pp^2 \sum_{k=1}^{n-1}\left(\Sbep [|X_k|^p] \right)^{\frac{2}{p}} A_n^{1-\frac{2}{p}}.
\end{align*}
From the above inequality, it can be shown that
\begin{align*}
A_n\le C_p\left\{\sum_{k=1}^n\Sbep[|X_k|^p]+\left(\sum_{k=1}^{n-1}\left(\Sbep [|X_k|^p] \right)^{\frac{2}{p}}\right)^{\frac{p}{2}}\right\}
\le C_p n^{p/2-1}\sum_{k=1}^n\Sbep[|X_k|^p].
\end{align*}
(\ref{eqth2.1.2}) is proved.

(b) Note the independence. From (\ref{eqproofth2.1.1}) it follows that
 \begin{align*}
\Sbep\left[|T_i|^p\right]\le & 2^pp^2\Sbep \left[\sum_{k=i}^n|X_k|^p\right]+p\sum_{k=i}^{n-1}\Sbep\left[ X_k(T_{k+1}^+)^{p-1}\right]
+2^p p^2 \sum_{k=i}^{n-1}\Sbep\left[ X_k^2 (T_{k+1}^+)^{p-2}\right]\\
= & 2^pp^2\Sbep \left[\sum_{k=i}^n|X_k|^p\right]+p\sum_{k=i}^{n-1}\Sbep[ X_k]\Sbep\left[(T_{k+1}^+)^{p-1}\right]+
 2^p p^2 \sum_{k=i}^{n-1}\Sbep [ X_k^2]\Sbep\left[(T_{k+1}^+)^{p-2}\right]\\
\le & 2^pp^2\Sbep \left[\sum_{k=1}^n|X_k|^p\right]
+2^pp^2 \sum_{k=1}^{n-1} \Sbep [X_k^2]   \left(\Sbep\left[|T_{k+1}|^p\right]\right)^{1-\frac{2}{p}}.
\end{align*}
Let $A_n=\max_{k\le n}\Sbep\left[|T_k|^p\right]$. Then
\begin{align*}
A_n\le
  2^pp^2 \Sbep \left[\sum_{k=1}^n|X_k|^p\right]
+2^pp^2 \sum_{k=1}^{n-1} \Sbep [X_k^2]  A_n^{1-\frac{2}{p}}.
\end{align*}
From the above inequality, it can be shown that
\begin{align*}
A_n\le C_p\left\{\sum_{k=1}^n\Sbep[|X_k|^p]+\left(\sum_{k=1}^n\Sbep [X_k^2] \right)^{\frac{p}{2}}\right\}.
\end{align*}
(\ref{eqth2.1.2}) is proved. $\Box$

(c) We first show
the  Marcinkiewicz-Zygmund inequality:
\begin{equation}\label{eqMZ}
\Sbep[\max_{k\le n}|S_k|^p]\le  C_p\left\{ \left( \sum_{k=1}^n\left(\big(\Sbep[X_k]\big)^++\big(\cSbep[X_k]\big)^- \right) \right)^p
 +    \Sbep\left(\sum_{k=1}^n X_k^2\right)^{\frac{p}{2}} \right\}.
\end{equation}
Without loss of generality, we assume  that $X_k$ is negatively dependent to $(X_{k+1},\ldots, X_n)$ for all $k=1,2,\ldots,n-1$. If $X_{k+1}$ is
negatively dependent to $(X_1,\ldots, X_k)$ for all $k=1,2,\ldots,n-1$, then (\ref{eqMZ}) will hold with $\max_{k\le n}|S_k|$ being replaced by
$\max_{0\le k\le n}|S_n-S_k|$. By noting  the fact $\max_{k\le n}|S_k|\le \max_{0\le k\le n}|S_n-S_k|+|S_n|\le 2\max_{0\le k\le n}|S_n-S_k|$,
 (\ref{eqMZ}) also is true.

Write   $\widetilde{T}_1=\max_{k\le n}|S_k|$. It is easily seen that $S_k+T_{k+1}^+=\max\big(S_k,S_{k+1},\ldots, S_n)\le T_1$.
So,  $T_{k+1}^+\le 2 \widetilde{T}_1$.
 Note (\ref{eqproofth2.1.1}). By the the definition of the negative dependence,
\begin{align*}\Sbep[X_k(T_{k+1}^+)^{p-1}]\le  & \begin{cases}  \Sbep[X_k]\Sbep[(T_{k+1}^+)^{p-1}], & \text{if }\Sbep[X_k] \ge 0 \\
 0, & \text{if }\Sbep[X_k] < 0 \end{cases}\\
 \le &  2^{p-1}\big(\Sbep[X_k]\big)^+ \Sbep[\widetilde{T}_1^{p-1}]\le   2^{p-1}\big(\Sbep[X_k]\big)^+ \big(\Sbep[\widetilde{T}_1^p]\big)^{1-\frac{1}{p}}
 \end{align*}
by the H\"older inequality.  By (\ref{eqproofth2.1.1}) and the H\"older inequality again, it follows that
 \begin{align*}
\Sbep\left[|T_1|^p\right]\le & 2^pp^2\Sbep \left[\sum_{k=1}^n|X_k|^p\right]+p\sum_{k=1}^{n-1}\Sbep\left[ X_k(T_{k+1}^+)^{p-1}\right]
+2^p p^2 \Sbep\left[\sum_{k=1}^{n-1}  X_k^2 (T_{k+1}^+)^{p-2}\right]\\
\le & 2^pp^2\Sbep \left[\sum_{k=1}^n|X_k|^p\right]+ 2^{p-1}p\sum_{k=1}^{n-1}\big(\Sbep[X_k]\big)^+ \big(\Sbep[\widetilde{T}_1^p]\big)^{1-\frac{1}{p}}\\
& + 2^p p^2 2^{p-2} \Sbep\left[\sum_{k=1}^{n-1} X_k^2 \widetilde{T}_1^{p-2}\right] \\
\le & 2^pp^2\Sbep \left[\sum_{k=1}^n|X_k|^p\right]+ 2^{p-1}p\left(\sum_{k=1}^{n-1}\big(\Sbep[X_k]\big)^+\right) \big(\Sbep[\widetilde{T}_1^p]\big)^{1-\frac{1}{p}}\\
& + 2^{2p-2}  p^2  \left[\Sbep\left(\sum_{k=1}^{n-1} X_k^2\right)^{\frac{p}{2}}\right]^{\frac{2}{p}}\big( \Sbep[\widetilde{T}_1^p]\big)^{1-\frac{2}{p}}.
\end{align*}
Similarly,
\begin{align*}
\Sbep\left[\left|\max_{k\le n}(-S_k)\right|^p\right]\le & 2^pp^2\Sbep \left[\sum_{k=1}^n|X_k|^p\right]
+ 2^{p-1}p\left(\sum_{k=1}^{n-1}\big(\Sbep[-X_k]\big)^+\right) \big(\Sbep[\widetilde{T}_1^p]\big)^{1-\frac{1}{p}}\\
& + 2^{2p-2}  p^2  \left[\Sbep\left(\sum_{k=1}^{n-1} X_k^2\right)^{\frac{p}{2}}\right]^{\frac{2}{p}}\big( \Sbep[\widetilde{T}_1^p]\big)^{1-\frac{2}{p}}.
\end{align*}
Hence
\begin{align*}
\Sbep[\widetilde{T}_1^p]\le & 2^{p+1}p^2\Sbep \left[\sum_{k=1}^n|X_k|^p\right]
+ 2^{p-1}p\left(\sum_{k=1}^n\big[\big(\Sbep[X_k]\big)^++\big(\cSbep[X_k]\big)^-\big]\right) \big(\Sbep[\widetilde{T}_1^p]\big)^{1-\frac{1}{p}}\\
& + 2^{2p-1}  p^2  \left[\Sbep\left(\sum_{k=1}^{n-1} X_k^2\right)^{\frac{p}{2}}\right]^{\frac{2}{p}}\big( \Sbep[\widetilde{T}_1^p]\big)^{1-\frac{2}{p}},
\end{align*}
which implies
\begin{align*}
\Sbep[\widetilde{T}_1^p]\le   C_p\left\{\Sbep \left[\sum_{k=1}^n|X_k|^p\right]
+ \left( \sum_{k=1}^n\left(\big(\Sbep[X_k]\big)^++\big(\cSbep[X_k]\big)^- \right) \right)^p
 +    \Sbep\left(\sum_{k=1}^n X_k^2\right)^{\frac{p}{2}} \right\}.
\end{align*}
Note
$$\left(\sum_{k=1}^n |x_k|^p\right)^{\frac{2}{p}}=\left(\sum_{k=1}^n |x_k^2|^{\frac{p}{2}}\right)^{\frac{2}{p}}
\le \sum_{k=1}^n x_k^2\;\; \text{ for } \frac{2}{p}\le 1. $$
So
$$ \sum_{k=1}^n |X_k|^p\le \left(\sum_{k=1}^n X_k^2\right)^{\frac{p}{2}}. $$
The Marcinkiewicz-Zygmund inequality (\ref{eqMZ}) is proved.

Now, for $2\le p\le 4$, applying (\ref{eqth2.1}) to the sequences $\{(X_1^+)^2,\ldots,(X_n^+)^2\}$   yields
 \begin{align*}
&\Sbep \Big[\Big(\Big\{\sum_{k=1}^n\big[(X_k^+)^2-\Sbep[(X_k^+)^2]\big] \Big\}^+\Big)^{\frac{p}{2}}\Big]\\
\le & 2^{2-\frac{p}{2}}\sum_{k=1}^n\Sbep\big[\big|(X_k^+)^2-\Sbep[(X_k^+)^2]\big|^{\frac{p}{2}}\big]
\le C_p\sum_{k=1}^n\Sbep\big[|X_k|^p\big].
\end{align*}
It follows that
\begin{align*}
 \Sbep\Big( \sum_{k=1}^n (X_k^+)^2 \Big)^{\frac{p}{2}}  \le C_p\left\{\Big( \sum_{k=1}^n \Sbep[(X_k^+)^2] \Big)^{\frac{p}{2}}
+\sum_{k=1}^n\Sbep\big[|X_k|^p\big]\right\}.
\end{align*}
Similarly
\begin{align*}
 \Sbep\Big( \sum_{k=1}^n (X_k^-)^2 \Big)^{\frac{p}{2}}  \le C_p\left\{\Big( \sum_{k=1}^n \Sbep[(X_k^-)^2] \Big)^{\frac{p}{2}}
+\sum_{k=1}^n\Sbep\big[|X_k|^p\big]\right\}.
\end{align*}
Hence
\begin{align*}
 \Sbep\Big( \sum_{k=1}^n  X_k^2 \Big)^{\frac{p}{2}}  \le C_p\left\{\Big( \sum_{k=1}^n \Sbep[ X_k^2] \Big)^{\frac{p}{2}}
+\sum_{k=1}^n\Sbep\big[|X_k|^p\big]\right\}.
\end{align*}
Substituting the above estimate to (\ref{eqMZ}) yield (\ref{eqth2.2}).

Suppose (\ref{eqth2.2}) is proved for $2^l< p\le 2^{l+1}$. Then applying it to the sequences $\{(X_1^+)^2,\ldots,(X_n^+)^2\}$ and
 $\{(X_1^-)^2,\ldots,(X_n^-)^2\}$ respectively
with $2^l< p/2\le 2^{l+1}$ yields
\begin{align*}
 \Sbep \Big[\Big(\sum_{k=1}^n(X_k^+)^2\Big)^{\frac{p}{2}}\Big]
\le & C_p\left\{\sum_{k=1}^n\Sbep\big[\big|(X_k^+)^2\big|^{\frac{p}{2}}\big]
+\Big(\sum_{k=1}^n\big(\Sbep[(X_k^+)^2]\big)^+\Big)^{\frac{p}{2}}
+ \left(\sum_{k=1}^n\Sbep\big[\big[(X_k^+)^2\big]^2\big]\right)^{\frac{p}{4}} \right\} \\
\le & C_p\left\{\sum_{k=1}^n\Sbep\big[\big|X_k\big|^p\big]+\left(\sum_{k=1}^n\Sbep\big[X_k^2\big]\right)^{\frac{p}{2}}
+\left(\sum_{k=1}^n\Sbep\big[X_k^4\big]\right)^{\frac{p}{4}}\right\}
\end{align*}
and
\begin{align*}
 \Sbep \Big[\Big(\sum_{k=1}^n(X_k^-)^2\Big)^{\frac{p}{2}}\Big]
\le   C_p\left\{\sum_{k=1}^n\Sbep\big[\big|X_k\big|^p\big]+\left(\sum_{k=1}^n\Sbep\big[X_k^2\big]\right)^{\frac{p}{2}}
+\left(\sum_{k=1}^n\Sbep\big[X_k^4\big]\right)^{\frac{p}{4}}\right\}.
\end{align*}
Hence
\begin{align}\label{eqproofth2.11}
 \Sbep \Big[\Big(\sum_{k=1}^nX_k^2\Big)^{\frac{p}{2}}\Big]
\le   C_p\left\{\sum_{k=1}^n\Sbep\big[\big|X_k\big|^p\big]+\left(\sum_{k=1}^n\Sbep\big[X_k^2\big]\right)^{\frac{p}{2}}
+\left(\sum_{k=1}^n\Sbep\big[X_k^4\big]\right)^{\frac{p}{4}}\right\}.
\end{align}
By applying  H\"older's inequality, it follows that
$$ \Sbep[X_k^4]=\Sbep\left[\Big(X_k^2\Big)^{\frac{p-4}{p-2}}\Big(|X_k|^p]\Big)^{\frac{2}{p-2}}\right]
\le \Big(\Sbep[X_k^2]\Big)^{\frac{p-4}{p-2}}\Big(\Sbep[|X_k|^p]\Big)^{\frac{2}{p-2}}, $$
which implies
\begin{equation}\label{eqproofth2.12}
\left(\sum_{k=1}^n\Sbep\big[X_k^4\big]\right)^{p/4}
\le C_p \left\{ \sum_{k=1}^n\Sbep[|X_k|^p]+\Big(\sum_{k=1}^n \Sbep[|X_k|^2]\Big)^{p/2}\right\}
\end{equation}
by some elementary calculation.  Substituting  (\ref{eqproofth2.11})  and (\ref{eqproofth2.12}) to  (\ref{eqMZ}), we conclude that
 (\ref{eqth2.2}) is also valid for $2^{l+1}< p\le 2^{l+2}$. By the induction, (\ref{eqth2.2}) proved. \hfill $\Box$

\bigskip

{\em Proof of Theorem \ref{th2c}}.  Suppose that $X_k$ is  independent to $(X_{k+1},\ldots, X_n)$ for each $k=1,\ldots, n-1$. Due to (\ref{eqproofth2.1.0}), we
have
 \begin{align*}
|T_k|^p\le   2^{2-p} |X_k|^p +   (T_{k+1}^+)^p +p X_k(T_{k+1}^+)^{p-1}.
\end{align*}
By the independence and the fact that $\cSbep[X+Y]\le \cSbep[X]+\Sbep[Y]$, it follows that
\begin{align*}
& \cSbep\left[ 2^{2-p} |X_k|^p +   (T_{k+1}^+)^p +p X_k(T_{k+1}^+)^{p-1}\big|T_{k+1}^+\right]\\
\le & 2^{2-p} \Sbep\left[|X_k|^p\right]+(T_{k+1}^+)^p+p \cSbep[X_k](T_{k+1}^+)^{p-1}
\le  2^{2-p} \Sbep\left[|X_k|^p\right]+(T_{k+1}^+)^p.
\end{align*}
So
\begin{align*}
  \cSbep\left[|T_k|^p\right]\le \cSbep\left[ 2^{2-p} |X_k|^p +   (T_{k+1}^+)^p +p X_k(T_{k+1}^+)^{p-1}\right] \le
  2^{2-p} \Sbep\left[|X_k|^p\right]+\cSbep\left[|T_{k+1}|^p\right].
\end{align*}
It follows that
$$\cSbep\left[|T_1|^p\right]\le 2^{2-p}  \sum_{k=1}^n  \Sbep\left[|X_k|^p\right]. $$
Now, (\ref{eqth2c.1}) is proved.  (\ref{eqth2c.2}) follows from (\ref{eqth2c.1}) by  considering the sequence $\{X_1,X_2,\ldots, X_n\}$ in the reverse order as $\{X_n,X_{n-1},\ldots, X_1\}$. \hfill $\Box$


  \section{Strong laws of large numbers under capacities}\label{secction3}

\setcounter{equation}{0}

Let $\mathcal G\subset\mathcal F$. A function $V:\mathcal G\to [0,1]$ is called a capacity if
$$ V(\emptyset)=0, \;V(\Omega)=1 \; \text{ and } V(A)\le V(B)\;\; \forall\; A\subset B, \; A,B\in \mathcal G. $$
It is called to be sub-additive if $V(A\bigcup B)\le V(A)+V(B)$ for all $A,B\in \mathcal G$  with $A\bigcup B\in \mathcal G$.

Here we only consider the capacities generated by a sub-linear expectation. Let $(\Omega, \mathscr{H}, \Sbep)$ be a sub-linear space, and  $\cSbep $  be  the conjugate expectation of $\Sbep$.
Furthermore, let us denote a pair $(\Capc,\cCapc)$ of capacities by
$$ \Capc(A):=\inf\{\Sbep[\xi]: I_A\le \xi, \xi\in\mathscr{H}\}, \;\; \cCapc(A):= 1-\Capc(A^c),\;\; \forall A\in \mathcal F, $$
where $A^c$  is the complement set of $A$.
Then
\begin{equation}\label{eq3.1} \begin{matrix}
&\Capc(A):=\Sbep[I_A], \;\; \cCapc(A):= \cSbep[I_A],\;\; \text{ if } I_A\in \mathscr H\\
&\Sbep[f]\le \Capc(A)\le \Sbep[g], \;\;\cSbep[f]\le \cCapc(A) \le \cSbep[g],\;\;
\text{ if } f\le I_A\le g, f,g \in \mathscr{H}.
\end{matrix}
\end{equation}
The corresponding Choquet integrals/expecations $(C_{\Capc},C_{\cCapc})$ are defined by
$$ C_V[X]=\int_0^{\infty} V(X\ge t)dt +\int_{-\infty}^0\left[V(X\ge t)-1\right]dt $$
with $V$ being replaced by $\Capc$ and $\cCapc$ respectively.

\begin{definition}\label{def3.1}
\begin{description}
\item{\rm (I)} A sub-linear expectation $\Sbep: \mathscr{H}\to \mathbb R$ is called to be  countably sub-additive if it satisfies
\begin{description}
  \item[\rm (e)] {\bf Countable sub-additivity}: $\Sbep[X]\le \sum_{n=1}^{\infty} \Sbep [X_n]$, whenever $X\le \sum_{n=1}^{\infty}X_n$,
  $X, X_n\in \mathscr{H}$ and
  $X\ge 0, X_n\ge 0$, $n=1,2,\ldots$;
 \end{description}
It is called   continuous if it satisfies
\begin{description}
  \item[\rm (f) ]  {\bf Continuity from below}: $\Sbep[X_n]\uparrow \Sbep[X]$ if $0\le X_n\uparrow X$, where $X_n, X\in \mathscr{H}$;
  \item[\rm (g) ] {\bf Continuity from above}: $\Sbep[X_n]\downarrow \Sbep[X]$ if $0\le X_n\downarrow X$, where $X_n, X\in \mathscr{H}$.
\end{description}

\item{\rm (II)}  A function $V:\mathcal F\to [0,1]$ is called to be  countably sub-additive if
$$ V\Big(\bigcup_{n=1}^{\infty} A_n\Big)\le \sum_{n=1}^{\infty}V(A_n) \;\; \forall A_n\in \mathcal F. $$

\item{\rm (III)}  A capacity $V:\mathcal F\to [0,1]$ is called a continuous capacity if it satisfies
\begin{description}
  \item[\rm (III1) ] {\bf Ccontinuity from below}: $V(A_n)\uparrow V(A)$ if $A_n\uparrow A$, where $A_n, A\in \mathcal F$;
  \item[\rm (III2) ] {\bf Continuity from above}: $V(A_n)\downarrow  V(A)$ if $A_n\downarrow A$, where $A_n, A\in \mathcal F$.
\end{description}
\end{description}
\end{definition}

{
\noindent{\bf Example~\ref{example1} (continued)} {\it The sub-linear expectation $\Sbep$ defined in Example~\ref{example1} is continuous from below,
and so is countably sub-additive.
If $\mathscr{H}$ is the set of all random variables and $\mathcal P$ is a weakly compact set of probability measures defined on $(\Omega, \mathcal F)$,
then  $(\Capc,\cCapc)$ is a pair of continuous capacities.
}

\begin{definition} Let $\{X_n;n\ge 1\}$ be a sequence of random variables in the sub-linear expectation space $(\Omega, \mathscr{H}, \Sbep)$. $X_1,X_2,\ldots $ are said to be independent
if $X_{i+1}$ is independent to $(X_1,\ldots, X_i)$ for each $i\ge 1$,
they are said to be negatively dependent if $X_{i+1}$ is negatively dependent
to $(X_1,\ldots, X_i)$ for each $i\ge 1$, and they are said to be identically distributed if $X_i\overset{d}= X_1$ for each $i\ge 1$.
\end{definition}

It is obvious that, if $\{X_n;n\ge 1\}$ is a sequence of  independent random variables and $f_1(x),f_2(x),\ldots\in C_{l,Lip}(\mathbb R)$,
 then $\{f_n(X_n);n\ge 1\}$ is  also a sequence of independent random variables;  if $\{X_n;n\ge 1\}$ is  a sequence of negatively dependent random variables and $f_1(x),f_2(x),\ldots\in C_{l,Lip}(\mathbb R)$
 are non-decreasing (resp.  non-increasing) functions, then $\{f_n(X_n);n\ge 1\}$ is  also a sequence of negatively dependent random variables.

For a sequence $\{X_n;n\ge 1\}$ of random variables in the sub-linear expectation space $(\Omega, \mathscr{H}, \Sbep)$,
we denote $S_n=\sum_{k=1}^n X_k$, $S_0=0$. The main purpose of
this section is to establish the following Kolmogorov type strong laws of larger numbers.

\begin{theorem} \label{th3}
\begin{description}
  \item[\rm (a)]  Let $\{X_n;n\ge 1\}$ be a sequence of negatively dependent and identically distributed  random variables.
  Suppose that $\Capc$ is countably sub-additive,    $C_{\Capc}[|X_1|]<\infty$ and $\lim_{c\to \infty} \Sbep\left[(|X_1|-c)^+\right]=0$. Then
\begin{equation}\label{eqth3.1}
\Capc\left(\Big\{\liminf_{n\to \infty}\frac{S_n}{n}< \cSbep[X_1]\Big\}\bigcup \Big\{\limsup_{n\to \infty}\frac{S_n}{n}> \Sbep[X_1]\Big\}\right)=0.
\end{equation}
\item[\rm (b)] Suppose that $\{X_n;n\ge 1\}$ is a sequence of independent and identically distributed  random variables, and $\Capc$ is continuous. If
\begin{equation}\label{eqth3.2}
\Capc\left(  \limsup_{n\to \infty}\frac{|S_n|}{n}=+\infty\Big\}\right)<1,
\end{equation}
then $C_{\Capc}[|X_1|]<\infty$.
\item[\rm (c)] Suppose that $\{X_n;n\ge 1\}$ is  a sequence of independent and identically distributed  random variables with
     $C_{\Capc}[|X_1|]<\infty$ and $\lim_{c\to \infty} \Sbep\left[(|X_1|-c)^+\right]=0$. If $\Capc$ is continuous, then
 \begin{equation}\label{eqcor3.2.1}
\Capc\left( \liminf_{n\to \infty}\frac{S_n}{n}= \cSbep[X_1]  \;  \text{ and }\;
  \limsup_{n\to \infty}\frac{S_n}{n}= \Sbep[X_1]\right)=1
\end{equation}
and
\begin{equation}\label{eqcor3.2.2}
\Capc\left( C\left\{\frac{S_n}{n}\right\}=\left[\cSbep[X_1], \Sbep[X_1]\right]\right)=1,
\end{equation}
where  $C(\{x_n\})$ denotes the cluster set of a sequence of $\{x_n\}$ in $\mathbb R$.
\end{description}
\end{theorem}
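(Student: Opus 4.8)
The plan is to prove the three assertions by combining the one-sided maximal inequalities of Section~\ref{secction2} with a truncation/Borel--Cantelli scheme for the a.s.\ upper bounds, and an independence-based converse Borel--Cantelli argument for the a.s.\ lower bounds and for the necessity. Throughout I write $Y_k=(-k)\vee(X_k\wedge k)$, which lies in $\mathscr{H}$ and, the cutoff being non-decreasing, inherits the negative dependence of $\{X_k\}$.

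For part~(a) I would first reduce to a single one-sided statement. Since $x\mapsto -x$ is non-increasing, $\{-X_n\}$ is again negatively dependent in the same sense, so it suffices to prove $\Capc\big(\limsup_n S_n/n>\Sbep[X_1]\big)=0$; applying this to $\{-X_k\}$ and using $\Sbep[-X_1]=-\cSbep[X_1]$ yields the companion bound $\liminf_n S_n/n\ge\cSbep[X_1]$, and sub-additivity of $\Capc$ on the two exceptional events gives (\ref{eqth3.1}). Because $\sum_k\Capc(|X_k|>k)=\sum_k\Capc(|X_1|>k)\le C_{\Capc}[|X_1|]<\infty$, the countable sub-additivity of $\Capc$ (a first Borel--Cantelli step) shows $X_k=Y_k$ for all large $k$ with capacity one, so $\limsup_n S_n/n=\limsup_n \widetilde S_n/n$ with $\widetilde S_n=\sum_{k\le n}Y_k$. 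Writing $W_k=Y_k-\Sbep[Y_k]$ (so $\Sbep[W_k]=0$), the centering terms are controlled by $|\Sbep[Y_k]-\Sbep[X_1]|\le\Sbep[(|X_1|-k)^+]\to0$ together with Cesàro convergence, which gives $\tfrac1n\sum_{k\le n}\Sbep[Y_k]\to\Sbep[X_1]$; it then remains to show $\limsup_n \tfrac1n\sum_{k\le n}W_k\le 0$ with capacity one.

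The fluctuation step is where the directional nature of negative dependence must be respected, and is the main obstacle of part~(a). Since $X_{k+1}$ is dependent on the \emph{past}, only the suffix-type maximal inequality of Corollary~\ref{cor2}(a) is available; the prefix maximum $\max_{k\le n}S_k$ is controllable only through Theorem~\ref{th2}(c), which carries a mean-width penalty $\big(\sum_k[(\cSbep[X_k])^-+(\Sbep[X_k])^+]\big)^p$ that does not vanish after normalising by $n$. The key observation is that the suffix maximum $M_n:=\max_{0\le l\le n}\big(\Sigma_n-\Sigma_l\big)$, with $\Sigma_m=\sum_{i\le m}W_i$, already dominates the partial sum through the term $l=0$, i.e.\ $\Sigma_n\le M_n$, while Corollary~\ref{cor2}(a) at $p=2$ gives the clean second-moment bound $\Sbep[M_n^2]\le\sum_{i\le n}\Sbep[W_i^2]$ with no penalty. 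I would then run the classical dyadic-blocking argument along $n_j=2^j$: Markov's inequality for $\Capc$ (via $I(M_{n_j}>\varepsilon n_j)\le M_{n_j}^2/(\varepsilon n_j)^2$), the variance series $\sum_k k^{-2}\Sbep[Y_k^2]\le C\,C_{\Capc}[|X_1|]<\infty$ (obtained from $\Sbep[\,\cdot\,]\le C_{\Capc}[\,\cdot\,]$ on bounded nonnegative variables, the elementary bound $\sum_k k^{-2}\min(x^2,k^2)\le C|x|$, and a Fubini computation), and a second Borel--Cantelli step via countable sub-additivity of $\Capc$ show $M_{n_j}/n_j\to 0$; a routine within-block estimate based on the reflection recursion $M_{n+1}=(M_n+W_{n+1})^+$ fills the gaps inside each block, yielding $\limsup_n \Sigma_n/n\le \limsup_n M_n/n=0$ and hence (\ref{eqth3.1}).

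For part~(b) I would argue by contraposition: assuming $C_{\Capc}[|X_1|]=\infty$, I must produce $\Capc(\limsup_n|S_n|/n=+\infty)=1$, contradicting (\ref{eqth3.2}). For each fixed $M$, $C_{\Capc}[|X_1|]=\infty$ forces $\sum_k\Capc(|X_1|>Mk)=\infty$, and a converse (second) Borel--Cantelli lemma for independent events under a continuous capacity gives $\Capc(|X_k|>Mk\ \text{i.o.})=1$. Since $|X_n|=|S_n-S_{n-1}|\le 2\max(|S_n|,|S_{n-1}|)$, on this event $\limsup_n|S_n|/n\ge M/2$ with capacity one; letting $M\to\infty$ and using continuity from above of $\Capc$ gives the claim. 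Part~(c) then combines the two directions: part~(a) already yields $\cSbep[X_1]\le\liminf_n S_n/n\le\limsup_n S_n/n\le\Sbep[X_1]$ with capacity one, and the reverse inequalities $\limsup_n S_n/n\ge\Sbep[X_1]$, $\liminf_n S_n/n\le\cSbep[X_1]$ follow from the same converse Borel--Cantelli mechanism applied to block events that force $S_n/n$ arbitrarily close to each endpoint infinitely often, proving (\ref{eqcor3.2.1}). Finally, $\sum_n\Capc(|X_n|/n>\varepsilon)\le\varepsilon^{-1}C_{\Capc}[|X_1|]<\infty$ forces $|X_n|/n\to0$ with capacity one, so the increments of $a_n:=S_n/n$ satisfy $a_n-a_{n-1}=X_n/n-S_{n-1}/\big(n(n-1)\big)\to0$; a bounded real sequence with vanishing consecutive increments has cluster set equal to $[\liminf a_n,\limsup a_n]$, so with (\ref{eqcor3.2.1}) the cluster set is $[\cSbep[X_1],\Sbep[X_1]]$, which is (\ref{eqcor3.2.2}). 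The hardest ingredient, shared by parts~(b) and~(c), is this converse Borel--Cantelli lemma: the classical proof rests on $\pr(\bigcap_k A_k^c)=\prod_k\pr(A_k^c)$, whereas under a sub-linear expectation independence is only one-directional and $\Capc$ is not additive, so the statement must be rebuilt from the definition of independence together with the continuity of $\Capc$.
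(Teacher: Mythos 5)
Your overall architecture (truncation, suffix maxima via Corollary~\ref{cor2}(a), Borel--Cantelli for the upper bound; independence-based converse arguments for the lower bounds and the necessity) matches the paper's, and your diagnosis of the directional obstruction of negative dependence is exactly right. But part~(a) has a genuine gap at the within-block step. With dyadic blocks $n_j=2^j$, the reflection recursion $M_{n+1}=(M_n+W_{n+1})^+$ only gives, for $n_j<n\le n_{j+1}$, the bound $M_n\le M_{n_j}+\sum_{i=n_j+1}^{n_{j+1}}W_i^+$, and this block increment has a non-vanishing drift: one can center so that $\Sbep[W_i]=0$, but then in general $\cSbep[W_i]<0$ and $\Sbep[W_i^+]$ is a strictly positive constant, so $\frac{1}{n_j}\sum_{i=n_j+1}^{n_{j+1}}W_i^+$ is of order $(\theta-1)\Sbep[W_1^+]$ with $\theta=2$ --- a constant, not $o(1)$. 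The two-sided prefix maximal inequality that makes this step ``routine'' classically is precisely what is unavailable here: Theorem~\ref{th2}(c) carries the mean-width penalty you yourself identified. This is why the paper blocks at $n_k=[\theta^k]$ with $\theta>1$ \emph{arbitrary}, bounds the within-block gap $-\sum_{j=n+1}^{n_{k+1}}f_j(X_j)$ by centered block sums of the monotone pieces $f_j^{\pm}(X_j)$ (its terms $(IV)_k,(V)_k$, controllable because monotone transforms preserve negative dependence) plus a drift term $(VI)_k\to(\theta-1)\Sbep[|X_1|]$, and only at the very end lets $\theta\downarrow 1$, combining the countably many exceptional sets by countable sub-additivity. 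With $\theta$ frozen at $2$, your argument proves only $\limsup_n S_n/n\le C$ for some positive constant $C$, not $\le\Sbep[X_1]$. (A smaller recurring gap: $\Capc(|X_k|>k)=\Capc(|X_1|>k)$ does not follow from identical distribution, which is a statement about $\Sbep$, not about $\Capc$; the paper transfers such bounds by sandwiching indicators between smooth functions $g_\epsilon\in C_{l,Lip}(\mathbb R)$, as in (\ref{eq3.1}).)

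In parts (b) and (c) you correctly isolate the converse Borel--Cantelli lemma as the crux, but you never supply it, and it cannot be cited as a black box --- constructing a substitute for it is most of the paper's work here. The paper's mechanism is: with $g_{1/2}$ a smooth indicator, independence and the product rule (\ref{eq1.2}) for nonnegative variables give $\cCapc\big(\sum_{j=1}^n g_{1/2}(\frac{|X_j|}{Mj})<l\big)\le e^{l/2}\prod_{j=1}^n\cSbep\big[\exp\{-\frac{1}{2}g_{1/2}(\frac{|X_j|}{Mj})\}\big]$, and the elementary bound $e^{-x}\le 1-\frac{1}{2}x\le e^{-x/2}$ for $0\le x\le\frac{1}{2}$ converts each factor into $\exp\{-\frac{1}{4}\Sbep[g_{1/2}(\frac{|X_j|}{Mj})]\}$, so that divergence of $\sum_j\Capc(|X_1|>Mj)$ forces $\Capc\big(\sum_j g_{1/2}(\frac{|X_j|}{Mj})=\infty\big)=1$ by continuity. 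Without this (or an equivalent), part (b) is unproven. In part (c) two further ingredients are missing: (i) the weak-LLN statement $\Capc(S_n/n>\Sbep[X_1]-\epsilon)\to1$, which the paper obtains from the $\cSbep$-moment inequality of Theorem~\ref{th2c} --- your Section~2 toolkit under $\Sbep$ alone does not produce it; and (ii) since $\Capc$ is not additive, two events of capacity one need not intersect in an event of capacity one, so one must bound the \emph{joint} event directly: the paper estimates $\Capc\big(S_n/n<\cSbep[X_1]+\epsilon$ and $(S_{n^2}-S_n)/(n^2-n)>\Sbep[X_1]-\epsilon\big)$ using independence of the two blocks, smooth indicators and (\ref{eq1.1}), and only then applies $\Capc(A_n\ \mathrm{i.o.})\ge\limsup_n\Capc(A_n)$ (valid by continuity). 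Your deduction of (\ref{eqcor3.2.2}) from (\ref{eqcor3.2.1}) via vanishing increments of $S_n/n$ is fine and agrees with the paper.
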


The following corollary follows from Theorem \ref{th3} immediately.
\begin{corollary}\label{cor3.1}
Suppose that $\mathscr{H}$ is a monotone class in the sense that $X\in \mathscr{H}$
whenever  $\mathscr{H}\ni X_n\downarrow X\ge 0$. Assume that  $\Sbep$ is continuous.
Let  $\{X_n;n\ge 1\}$ be a sequence of independent and identically distributed  random variables in $(\Omega, \mathscr{H}, \Sbep)$.  Then
$$  (\ref{eqth3.2}) \implies C_{\Capc}[|X_1|]<\infty \implies (\ref{eqth3.1}). $$
\end{corollary}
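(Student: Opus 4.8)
The plan is to read off the two implications as special cases of Theorem~\ref{th3}: the first implication is exactly part~(b) and the second is part~(a), once their standing hypotheses are checked. Independence supplies both ingredients needed for part~(a), since an independent sequence is negatively dependent (as noted after Example~\ref{example1}) and, being i.i.d., is identically distributed, while $C_{\Capc}[|X_1|]<\infty$ is precisely the assumption of the second implication. Thus the whole corollary reduces to deducing, from ``$\mathscr{H}$ is a monotone class and $\Sbep$ is continuous'', the three remaining conditions: (i) the truncation condition $\lim_{c\to\infty}\Sbep[(|X_1|-c)^+]=0$; (ii) countable sub-additivity of $\Capc$, needed in part~(a); and (iii) continuity of $\Capc$, needed in part~(b).

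Condition (i) is immediate: for $c=1,2,\ldots$ the variables $(|X_1|-c)^+$ lie in $\mathscr{H}$ and decrease to $0$, so continuity from above of $\Sbep$ (property (g)) forces $\Sbep[(|X_1|-c)^+]\downarrow 0$, and monotonicity in $c$ upgrades this to the continuous limit. The substantive work is the transfer of continuity from $\Sbep$ to $\Capc$, and I would obtain it by first proving continuity from above of $\Capc$. Given $A_n\downarrow A$ and $\epsilon>0$, choose $g_n\in\mathscr{H}$ with $I_{A_n}\le g_n$, truncate to assume $0\le g_n\le 1$ (so that $g_n\wedge 1\in\mathscr{H}$ still dominates $I_{A_n}$), and arrange $\Sbep[g_n]\le \Capc(A_n)+\epsilon 2^{-n}$. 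Setting $\tilde g_n=g_1\wedge\cdots\wedge g_n\in\mathscr{H}$, the identity $\tilde g_n=g_n-(g_n-\tilde g_{n-1})^+$ together with sub-additivity of $\Sbep$ gives $\Sbep[\tilde g_n]\le\Sbep[g_n]\le\Capc(A_n)+\epsilon 2^{-n}$, while $\tilde g_n\ge I_{A_n}$. Since $\tilde g_n\downarrow\tilde g:=\inf_n g_n\ge I_{\bigcap_n A_n}=I_A$ with $\tilde g\ge 0$, the monotone-class assumption places $\tilde g$ back in $\mathscr{H}$, and continuity from above of $\Sbep$ yields $\Sbep[\tilde g_n]\downarrow\Sbep[\tilde g]\ge\Capc(A)$. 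Hence $\Capc(A)\le\lim_n\Sbep[\tilde g_n]\le\lim_n\Capc(A_n)$, which with monotonicity gives $\Capc(A_n)\downarrow\Capc(A)$.

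Once continuity from above of $\Capc$ is in hand, the remaining properties follow formally. Sub-additivity of $\Capc$ is inherited from property (c), because $g\ge I_A$ and $h\ge I_B$ force $g+h\ge I_{A\cup B}$. For $A_n\uparrow A$ the sets $A\setminus A_n$ decrease to $\emptyset$, so $\Capc(A\setminus A_n)\downarrow 0$ by continuity from above, and $\Capc(A)\le\Capc(A_n)+\Capc(A\setminus A_n)$ gives continuity from below; combined with finite sub-additivity this yields countable sub-additivity. Thus $\Capc$ is a continuous, countably sub-additive capacity, conditions (ii) and (iii) hold, and the corollary follows by applying Theorem~\ref{th3}(b) to the first implication and Theorem~\ref{th3}(a) to the second.

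I expect the main obstacle to be exactly this transfer step. Because the indicators $I_{A_n}$ need not belong to $\mathscr{H}$, the capacity $\Capc$ is only an infimum of $\Sbep$-values over dominating test functions, so the continuity of $\Sbep$ cannot be applied to indicators directly. The device of passing to running minima $\tilde g_n$ and invoking the monotone-class closure to recapture the limit $\tilde g\in\mathscr{H}$ is what makes continuity from above of $\Capc$ accessible; everything after that, including the passage from continuity from above to continuity from below and to countable sub-additivity, is routine bookkeeping with the sub-additivity of $\Capc$.
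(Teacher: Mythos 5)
Your overall reduction -- get the truncation condition from continuity from above of $\Sbep$, then invoke Theorem~\ref{th3}(a) and (b) -- is natural, and step (i) is correct. The proof breaks at the transfer step, and in a way that cannot be repaired. In your argument for continuity from above of $\Capc$, the chain you end with is $\Capc(A)\le\Sbep[\tilde g]=\lim_n\Sbep[\tilde g_n]\le\lim_n\Capc(A_n)$; but $\Capc(A)\le\lim_n\Capc(A_n)$ is the \emph{trivial} direction, already given by monotonicity since $A\subset A_n$. Continuity from above requires the reverse inequality $\lim_n\Capc(A_n)\le\Capc(A)$, which would need $\Sbep[\tilde g]\le\Capc(A)$, i.e.\ that your $\tilde g$ is a near-optimal dominating function for $A$; the construction gives only $\Sbep[\tilde g]\ge\Capc(A)$. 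Moreover, the lemma you are aiming at is false at this level of generality, so no argument can close the gap: take $\Omega=[0,1]$, $\mathcal F=2^{[0,1]}$, $\mathscr{H}$ the bounded Borel functions (a linear space, closed under $C_{l,Lip}$ compositions, and a monotone class), and $\Sbep$ the Lebesgue integral, which is linear hence sub-linear and continuous. Then $\Capc$ is Lebesgue outer measure, and if $V+q_1,V+q_2,\ldots$ are the rational translates of a Vitali set covering $[0,1]$, the sets $A_n=[0,1]\setminus\bigcup_{k<n}(V+q_k)$ decrease to $\emptyset$ while $\Capc(A_n)=1$ for every $n$ (finite unions of Vitali translates have inner measure zero, by a Steinhaus-theorem argument). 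So under the corollary's hypotheses $\Capc$ need not be continuous, nor countably sub-additive via your route, on all of $\mathcal F$; the hypotheses of Theorem~\ref{th3}(a),(b) as literally stated cannot be verified globally.

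The paper's proof avoids this entirely by localizing rather than globalizing: it works on $\mathcal H=\{A: I_A\in\mathscr{H}\}$, where $\Capc(A)=\Sbep[I_A]$, so that continuity and countable sub-additivity of $\Capc$ \emph{restricted to} $\mathcal H$ are inherited directly from the continuity of $\Sbep$ (Lemma~\ref{lem1}, (P1) and (P5)). The monotone-class assumption is used only to check that every event actually appearing in the proof of Theorem~\ref{th3} lies in $\mathcal H$: indicators such as $I\{|X_j|>j\}$ are decreasing limits $\lim_{\epsilon\to 0}g_\epsilon$ of elements of $\mathscr{H}$, hence lie in $\mathscr{H}$, and $\mathcal H$ is then closed under the countable unions and intersections that form the limsup/liminf events. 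The corollary is therefore proved by re-running the proof of Theorem~\ref{th3} with all capacities evaluated on events of $\mathcal H$, not by verifying its hypotheses on all of $\mathcal F$. If you replace your transfer lemma by this localization, the rest of your reduction (including step (i)) goes through.
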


Because $\Capc$ may be not countably sub-additive in general, we define an  outer capacity $\outCapc$ by
$$ \outCapc(A)=\inf\Big\{\sum_{n=1}^{\infty}\Capc(A_n): A\subset \bigcup_{n=1}^{\infty}A_n\Big\},\;\; \outcCapc(A)=1-\outCapc(A^c),\;\;\; A\in\mathcal F.$$
  Then it can be shown that $\outCapc(A)$ is a countably sub-additive capacity with $\outCapc(A)\le \Capc(A)$ and the following properties:
  \begin{description}
    \item[\rm (a*)] If $\Capc$ is countably sub-additive, then  $\outCapc\equiv\Capc$.
    \item[\rm (b*)] If $I_A\le g$, $g\in \mathscr{H}$, then $\outCapc(A)\le \Sbep[g]$. Further, if $\Sbep$ is countably sub-additive, then
   \begin{equation}\label{eq3.2}
    \Sbep[f]\le \outCapc(A)\le \Capc(A)\le \Sbep[g], \;\; \forall f\le I_A\le g, f,g\in \mathscr{H}.
    \end{equation}
    \item[\rm (c*)]  $\outCapc$ is the largest countably sub-additive capacity satisfying
    the property that $\outCapc(A)\le \Sbep[g]$ whenever $I_A\le g\in \mathscr{H}$, i.e.,
    if $V$ is also a countably sub-additive capacity satisfying $V(A)\le \Sbep[g]$ whenever $I_A\le g\in \mathscr{H}$, then $V(A)\le \outCapc(A)$.
    \end{description}
   In fact, it is obvious that {\rm (c*)} implies  {\rm (a*)}.  For {\rm (b*)} and {\rm (c*)}, suppose $A\subset \bigcup_{n=1}^{\infty}A_n$, $\sum_{n=1}^{\infty}\Capc(A_n)
   \le \outCapc(A)+\epsilon/2$ with $I_{A_n}\le f_n\in\mathscr{H} $ and $\Sbep[f_n]\le \Capc(A_n)+\epsilon/2^{n+2}$.
   If $\mathscr{H}\ni f\le I_A$, then
   $ f\le \sum_{n=1}^{\infty} I_{A_n}\le \sum_{n=1}^{\infty} f_n$,
   which implies
   $$ \Sbep[f]\le \sum_{n=1}^{\infty} \Sbep[f_n]\le \sum_{n=1}^{\infty}\Capc(A_n)+\sum_{n=1}^{\infty}\epsilon/2^{n+2}\le
   \outCapc(A)+\epsilon $$
   by the countable sub-additivity of $\Sbep$. While, if $V$ is countably sub-additive, then
   $$ V(A)\le \sum_{n=1}^{\infty}V(A_n)\le \sum_{n=1}^{\infty}\Sbep[f_n]\le \sum_{n=1}^{\infty}\Capc(A_n)+\sum_{n=1}^{\infty}\epsilon/2^{n+2}
  \le \outCapc(A)+\epsilon. $$

   \begin{theorem} \label{th4}  Let $\{X_n;n\ge 1\}$ be a sequence identically distributed  random variables in $(\Omega,\mathscr{H},\Sbep)$.
  \begin{description}
  \item[\rm (a)]   Suppose that $X_1, X_2, \ldots$ are  negatively dependent with    $C_{\Capc}[|X_1|]<\infty$ and $\lim_{c\to \infty} \Sbep\left[(|X_1|-c)^+\right]=0$.
   Then
\begin{equation}\label{eqth4.1}
\outCapc\left(\Big\{\liminf_{n\to \infty}\frac{S_n}{n}< \cSbep[X_1]\Big\}\bigcup \Big\{\limsup_{n\to \infty}\frac{S_n}{n}> \Sbep[X_1]\Big\}\right)=0.
\end{equation}
\item[\rm (b)] Suppose that $X_1,X_2,\ldots$ are  independent,   $\outCapc$ is continuous and  $\Sbep$ is countably sub-additive.
If
\begin{equation}\label{eqth4.2}
\outCapc\left(  \limsup_{n\to \infty}\frac{|S_n|}{n}=+\infty\Big\}\right)<1,
\end{equation}
then $C_{\Capc}[|X_1|]<\infty$.
\end{description}
\end{theorem}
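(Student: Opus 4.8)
The plan is to deduce both parts from the machinery of Sections~\ref{secction2}--\ref{secction3}, using throughout that $\outCapc$ is countably sub-additive (so that the first Borel--Cantelli lemma is available) together with the maximal inequalities of Corollary~\ref{cor2}. For part~(a) I would first reduce to a one-sided statement: since $\{-X_n\}$ is again negatively dependent and identically distributed with $C_{\Capc}[|{-}X_1|]=C_{\Capc}[|X_1|]<\infty$ and $\Sbep[(|{-}X_1|-c)^+]\to0$, and since $\Sbep[-X_1]=-\cSbep[X_1]$, the event $\{\limsup_n S_n/n>\Sbep[X_1]\}$ for $\{X_n\}$ is the event $\{\liminf_n(-S_n)/n<\cSbep[-X_1]\}$ for $\{-X_n\}$; hence it suffices to bound the $\liminf$-half of (\ref{eqth4.1}), and the union of the two $\outCapc$-null sets is null by sub-additivity. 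Next I truncate, $Y_k=(-k)\vee(X_k\wedge k)=\phi_k(X_k)$ with $\phi_k\in C_{l,Lip}$ nondecreasing. The truncation remainder is negligible: choosing $g_k\in C_{l,Lip}$ with $I_{\{|x|>k\}}\le g_k\le I_{\{|x|>k-1\}}$ and using identical distribution gives $\sum_k\Capc(|X_k|>k)\le\sum_k\Sbep[g_k(X_1)]\le 1+C_{\Capc}[|X_1|]<\infty$, so by the first Borel--Cantelli lemma (countable sub-additivity of $\outCapc$ and $\outCapc\le\Capc$) one has $\outCapc$-a.s. $X_k=Y_k$ for all large $k$, whence $n^{-1}(S_n-\widetilde S_n)\to0$ with $\widetilde S_n=\sum_{k\le n}Y_k$. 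Moreover $\Sbep[Y_k]=\Sbep[\phi_k(X_1)]\to\Sbep[X_1]$ and $\cSbep[Y_k]\to\cSbep[X_1]$, because $|\Sbep[\phi_k(X_1)]-\Sbep[X_1]|\le\Sbep[(|X_1|-k)^+]\to0$, so by Ces\`aro the deterministic averages $n^{-1}\sum_{k\le n}\Sbep[Y_k]$ and $n^{-1}\sum_{k\le n}\cSbep[Y_k]$ converge to $\Sbep[X_1]$ and $\cSbep[X_1]$.

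The heart of part~(a) is a dyadic-block argument matched to the one-directional negative dependence. Writing $Y_k'=Y_k-\Sbep[Y_k]$ and $Y_k''=Y_k-\cSbep[Y_k]$ (so $\Sbep[Y_k']=0$ and $\cSbep[Y_k'']=0$), both are again negatively dependent monotone images of the $X_k$. Applying the suffix maximal bound (\ref{eqcor2.1}) with $p=2$ to $\{Y_k'\}$ controls $\Sbep[(\max_{k\le N}(\widetilde S_N'-\widetilde S_k'))^2]\le\sum_{k\le N}\Sbep[(Y_k')^2]$; via Markov's inequality under $\outCapc$ (property~{\rm (b*)}) at $N=2^{m+1}$ and the first Borel--Cantelli lemma this yields the within-block lower bound $\widetilde S_n'\ge\widetilde S_{2^{m+1}}'-\varepsilon 2^{m+1}$ for $2^m<n\le 2^{m+1}$, eventually. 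Applying the $(S_n^+)$-bound (\ref{eqcor2.3}) to $\{-Y_k''\}$ (which has nonpositive upper expectation) at the dyadic points $2^m$ gives the subsequence lower bound $\liminf_m \widetilde S_{2^m}''/2^m\ge0$. Combining these, undoing the two centerings with the Ces\`aro limits above, and letting $\varepsilon\downarrow0$ delivers $\liminf_n S_n/n\ge\cSbep[X_1]$ $\outCapc$-a.s. All of this rests on the summability $\sum_m 2^{-2m}\sum_{k\le 2^{m+1}}\Sbep[(Y_k')^2]\asymp\sum_k k^{-2}\Sbep[(Y_k')^2]<\infty$, which follows from $C_{\Capc}[|X_1|]<\infty$ by the pointwise layer decomposition $(|x|\wedge k)^2\le\sum_{j=0}^{k-1}(2j+1)I_{\{|x|>j\}}$, finite sub-additivity of $\Sbep$, and the sandwich (\ref{eq3.1}), giving $\sum_k k^{-2}\Sbep[(|X_1|\wedge k)^2]\le C(1+C_{\Capc}[|X_1|])$.

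For part~(b) the first step is that, since $\Sbep$ is countably sub-additive, $\Capc$ is countably sub-additive as well, hence $\outCapc\equiv\Capc$ by property~{\rm (a*)}; this lets me argue entirely with $\Capc$ and transfer the conclusion to $\outCapc$ at the end. I argue by contradiction: if $C_{\Capc}[|X_1|]=\infty$ then $\sum_n\Capc(|X_1|>cn)=\infty$ for every $c>0$. Putting $A_n=\{|X_n|>cn\}$, a second Borel--Cantelli lemma under the capacity is available because of independence: approximating the indicators of $A_n^c$ from above by nonnegative $C_{l,Lip}$ functions and factorizing the lower expectation of the product by (\ref{eq1.2}) gives $\cCapc(\bigcap_{n=N}^M A_n^c)\le\prod_{n=N}^M(1-\Capc(A_n))\le\exp(-\sum_{n=N}^M\Capc(A_n))\to0$, so $\Capc(\bigcup_{n\ge N}A_n)=1$ for every $N$, and continuity of $\Capc=\outCapc$ from above yields $\Capc(\limsup_n A_n)=1$. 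Finally $|X_n|\le|S_n|+|S_{n-1}|$ gives $\limsup_n|X_n|/n\le2\limsup_n|S_n|/n$, so $\limsup_nA_n\subseteq\{\limsup_n|S_n|/n\ge c/2\}$ and thus $\Capc(\limsup_n|S_n|/n\ge c/2)=1$ for all $c$; letting $c\to\infty$ and using continuity from above contradicts (\ref{eqth4.2}). Hence $C_{\Capc}[|X_1|]<\infty$.

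The step I expect to be the main obstacle is the second Borel--Cantelli lemma under the capacity in part~(b): the factorization $\cCapc(\bigcap A_n^c)=\prod(1-\Capc(A_n))$ must be obtained through the (one-directional) independence via (\ref{eq1.2}), which requires replacing the possibly non-$\mathscr H$ indicators by $C_{l,Lip}$ approximants and controlling the resulting errors, after which the continuity of $\outCapc$ (and the identity $\outCapc=\Capc$ coming from countable sub-additivity of $\Sbep$) is what makes the full-capacity conclusion usable. In part~(a) the corresponding delicate point is to pair each half of the law of large numbers with the maximal inequality whose one-sidedness is compatible with the one-directional negative dependence---here the suffix-sum bound (\ref{eqcor2.1}) is precisely what controls the \emph{downward} within-block fluctuations needed for the $\liminf$---together with the moment/Choquet comparison recorded above.
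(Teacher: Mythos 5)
Your proposal follows the paper's architecture (truncation at level $k$, centering, blocking, the one-sided inequalities of Corollary~\ref{cor2} plus Markov and Borel--Cantelli under the countably sub-additive $\outCapc$ for part (a); contradiction plus an independence factorization of $\cSbep$ over smooth approximants for part (b)), and most individual steps are sound, but there are two genuine gaps. In part (a), fixed dyadic blocks cannot give the stated bound when $\Sbep[X_1]>\cSbep[X_1]$. The one-sided hypotheses force the mismatch you built in: the suffix-maximal bound (\ref{eqcor2.1}) needs the \emph{upper}-centered $Y_k'$ (so that $\Sbep[Y_k']\le 0$), while the endpoint bound (\ref{eqcor2.3}) applied to $\{-Y_k''\}$ needs the \emph{lower}-centered $Y_k''$ (so that $\Sbep[-Y_k'']\le 0$). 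When you combine the two and undo the centerings, for $2^m<n\le 2^{m+1}$ you are left with the term
$$ \sum_{k=n+1}^{2^{m+1}}\big(\Sbep[Y_k]-\cSbep[Y_k]\big)\;\approx\;(2^{m+1}-n)\big(\Sbep[X_1]-\cSbep[X_1]\big), $$
which for $n$ near $2^m$ is of exact order $n$. Hence your argument yields only $\liminf_n S_n/n\ge 2\cSbep[X_1]-\Sbep[X_1]$ $\outCapc$-a.s., which agrees with the claim only in the classical case $\Sbep[X_1]=\cSbep[X_1]$ --- precisely the degenerate case the theorem is not about. This is why the paper uses blocks $n_k=[\theta^k]$ with \emph{arbitrary} $\theta>1$: the analogous non-vanishing error (its term $(VI)_k$) is of order $(\theta-1)$, and it is removed by letting $\theta\downarrow 1$ at the end, the countably many values of $\theta$ and $\epsilon$ being handled by countable sub-additivity of $\outCapc$. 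Your proof needs the same replacement of $2^m$ by $[\theta^m]$.

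In part (b), the opening claim --- that countable sub-additivity of $\Sbep$ implies countable sub-additivity of $\Capc$, hence $\outCapc\equiv\Capc$ by (a*) --- is false, and the paper never asserts it (the hypothesis of (a*) is on $\Capc$, not on $\Sbep$). The infimum defining $\Capc$ runs over \emph{single} dominating functions in $\mathscr{H}$, and no element of $\mathscr{H}$ need sit below $\sum_n\xi_n$: for instance, with $\mathscr{H}$ the continuous functions on $[0,1]$ and $\Sbep$ the Lebesgue integral (which satisfies (e)), an open dense set $U$ of measure $\epsilon$ has $\Capc(U)=1$, while its component intervals have capacities summing to $\epsilon$. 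This matters because $\outCapc\le\Capc$, so your conclusion $\Capc\big(\limsup_n|S_n|/n=\infty\big)=1$ simply does not contradict (\ref{eqth4.2}), which is a statement about $\outCapc$; the contradiction never closes. The countable sub-additivity of $\Sbep$ enters the paper's proof elsewhere: via the sandwich (\ref{eq3.2}) one gets, for the smooth functions $g_{1/2}$,
$$ \Capc\Big(\sum_{j=1}^n g_{1/2}\big(\tfrac{|X_j|}{Mj}\big)>l\Big)\le \outCapc\Big(\sum_{j=1}^n g_{1/2}\big(\tfrac{|X_j|}{Mj}\big)>l/2\Big), $$
which transfers the ``$\to 1$'' statement from $\Capc$ to $\outCapc$ on a slightly enlarged event, after which the assumed continuity of $\outCapc$ finishes the argument. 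Your factorization/second Borel--Cantelli step is itself fine (it is the paper's $\exp\{-\frac{1}{2}\sum_j g_{1/2}\}$ device in product form), but without this transfer step part (b) fails as written.
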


For proving the theorems, we need some properties of the sub-linear expectations and capacities.  We define an extension of $\Sbep$ on the space
 of all random variables by
$$ \extSbep[X]=\inf\{ \Sbep[Y]: X\le Y, Y\in \mathscr{H}\}. $$
Then $\extSbep$ is a sub-linear expectation on the space of all random variables, and
$$ \extSbep[X]=\Sbep [X] \;\; \forall X\in \mathscr{H}, \;\; \Capc(A)=\extSbep[I_A]\;\; \forall A\in \mathcal F. $$

We have the following properties.
\begin{lemma}\label{lem1}
 \begin{description}
   \item[\rm (P1)] If $\Sbep$ is continuous from below, then it is countably sub-additive;
   Similarly, if $\Capc$ is   continuous from below, then it is countably sub-additive;
   \item[\rm (P2)]  If $\Capc$ is   continuous from above, then $\Capc$ and $\cCapc$ are continuous;
    \item[\rm (P3)]  If $\Sbep$ is   continuous from above, then $\Sbep$ is continuous from below controlled, that  is,
    $\Sbep[X_n]\uparrow \Sbep[X]$ if $0\le X_n\uparrow X$, where $X_n, X\in \mathscr{H}$ and $\Sbep X<\infty$;
         \item[\rm (P4)]  Suppose that $\Sbep$ is countably sub-additive. If $ X\le \sum_{n=1}^{\infty} X_n$, $X,X_n\ge 0$ and $X\in \mathscr{H}$, then
      $ \Sbep[X]\le \sum_{n=1}^{\infty} \extSbep[X_n]$;
   \item[\rm (P5)] Set $\mathcal H=\{A: I_A\in \mathscr{H}\}$, then
$\Capc$ is a countably sub-additive  capacity in $\mathcal H$ if
$\Sbep$ is countably sub-additive in $\mathscr{H}$, and,
$(\Capc,\cCapc)$ is a pair of continuous capacities in $\mathcal H$ if
$\Sbep$ is continuous in $\mathscr{H}$.
 \end{description}
\end{lemma}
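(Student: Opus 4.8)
The plan is to treat the five assertions separately, each reducing to the definitions together with the finite sub-additivity that $\Sbep$ and $\Capc$ already enjoy. For (P1) I would truncate the infinite sum: given $0 \le X \le \sum_{n} X_n$ with all terms in $\mathscr{H}$, set $Y_N = \sum_{k=1}^N X_k \in \mathscr{H}$ and observe that $0 \le X \wedge Y_N \uparrow X$. Continuity from below gives $\Sbep[X \wedge Y_N] \uparrow \Sbep[X]$, while monotonicity and finite sub-additivity bound $\Sbep[X \wedge Y_N] \le \sum_{k=1}^N \Sbep[X_k]$; letting $N \to \infty$ yields the claim. The capacity version is identical, with $B_N = \bigcup_{k=1}^N A_k \uparrow \bigcup_k A_k$ and the finite sub-additivity $\Capc(A \cup B) \le \Capc(A) + \Capc(B)$ that follows at once from $I_{A \cup B} \le I_A + I_B$ and the defining infimum of $\Capc$.

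For (P2) the key observation is that a finitely sub-additive capacity that is continuous from above is automatically continuous from below. Given $A_n \uparrow A$, the sets $A \setminus A_n$ decrease to $\emptyset$, so continuity from above forces $\Capc(A \setminus A_n) \downarrow 0$; sub-additivity then gives $\Capc(A) \le \Capc(A_n) + \Capc(A \setminus A_n)$, and combined with monotonicity $\Capc(A_n) \le \Capc(A)$ this squeezes out $\Capc(A_n) \uparrow \Capc(A)$. Continuity of $\cCapc$ follows by complementation: $A_n \uparrow A$ iff $A_n^c \downarrow A^c$, and $\cCapc(A_n) = 1 - \Capc(A_n^c)$ converts continuity from above (resp.\ below) of $\Capc$ into continuity from below (resp.\ above) of $\cCapc$. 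For (P3) I would write $X = X_n + (X - X_n)$ with $0 \le X - X_n \downarrow 0$; continuity from above gives $\Sbep[X - X_n] \downarrow 0$, and sub-additivity $\Sbep[X] \le \Sbep[X_n] + \Sbep[X - X_n]$ together with $\Sbep[X_n] \le \Sbep[X] < \infty$ forces $\Sbep[X_n] \uparrow \Sbep[X]$.

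For (P4) I would exploit the defining infimum of $\extSbep$: assuming each $\extSbep[X_n]$ finite (otherwise the bound is trivial), pick $Y_n \in \mathscr{H}$ with $X_n \le Y_n$ and $\Sbep[Y_n] \le \extSbep[X_n] + \varepsilon 2^{-n}$; since $X_n \ge 0$ we automatically get $Y_n \ge 0$, and $X \le \sum_n X_n \le \sum_n Y_n$ with all terms nonnegative and in $\mathscr{H}$, so countable sub-additivity of $\Sbep$ gives $\Sbep[X] \le \sum_n \Sbep[Y_n] \le \sum_n \extSbep[X_n] + \varepsilon$; then let $\varepsilon \downarrow 0$. For (P5), on $\mathcal H$ we have $\Capc(A) = \Sbep[I_A]$ and $\cCapc(A) = \cSbep[I_A]$; countable sub-additivity transfers directly from $I_{\bigcup A_n} \le \sum_n I_{A_n}$, and continuity transfers from $0 \le I_{A_n} \uparrow I_A$ (resp.\ $\downarrow$) via continuity of $\Sbep$, using that $\mathcal H$ is closed under complements and finite unions because $1 - I_A$ and $I_A \vee I_B$ lie in $\mathscr{H}$. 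Continuity of $\cCapc$ then follows either directly or by the complementation argument of (P2).

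I expect the only genuinely non-routine step to be the continuity-from-below half of (P2): everything else is a mechanical application of monotonicity, finite sub-additivity, and the two one-sided continuity hypotheses, but (P2) requires noticing that sub-additivity is precisely the device that converts the downward hypothesis into the upward conclusion through the auxiliary sets $A \setminus A_n \downarrow \emptyset$.
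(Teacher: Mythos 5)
Your proposal is correct and follows essentially the same route as the paper: truncation plus continuity from below for (P1), the auxiliary sets $A\setminus A_n\downarrow\emptyset$ (resp.\ $X-X_n\downarrow 0$) combined with finite sub-additivity for (P2) and (P3), the $\varepsilon 2^{-n}$-approximants $Y_n\in\mathscr{H}$ for (P4), and the indicator-transfer argument for (P5), which the paper simply declares obvious. The only differences are expository — you spell out the complementation step for $\cCapc$ and the closure properties of $\mathcal H$, which the paper leaves implicit.
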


\begin{proof}. For {\rm (P1)}, if $0\le X\le \sum_{k=1}^{\infty}X_n$, $0\le X, X_n\in \mathscr{H}$, then
\begin{align*}
 \Sbep[X]=& \Sbep\left[\big(\sum_{k=1}^{\infty}X_k\big)\wedge X\right]= \lim_{n\to \infty} \Sbep\left[\big(\sum_{k=1}^nX_k\big)\wedge X\right]\\
 \le & \lim_{n\to \infty} \Sbep\left[ \sum_{k=1}^nX_k \right]\le \lim_{n\to \infty}\sum_{k=1}^n\Sbep\left[X_k \right]
 \le \sum_{k=1}^{\infty} \Sbep[X_k].
 \end{align*}
{\rm (P1)} is proved.

For {\rm (P2)}, it is sufficient to note that, if $A_n \uparrow A$, then $A\backslash A_n \downarrow \emptyset$ and $0\le \Capc(A)-\Capc(A_n)\le \Capc(A\backslash A_n)$.
Similarly, for {\rm (P3)}, it is sufficient to note that $X-X_n \downarrow 0$ and $0\le \Sbep[X]-\Sbep[X_n]\le \Sbep[X-X_n]$.

For {\rm (P4)}, choose $0\le Y_n\in \mathscr{H}$ such that $Y_n\ge X_n$,
$\Sbep[Y_n]\le \extSbep[X_n]+\frac{\epsilon}{2^{n+1}}$. Then
$X\le \sum_{n=1}^{\infty} Y_n.$
By the countable sub-additivity of $\Sbep$,
\begin{align*}
\Sbep[X]\le    \sum_{n=1}^{\infty} \Sbep[Y_n]\le \sum_{n=1}^{\infty}(\extSbep[X_n]+\frac{\epsilon}{2^{n+1}})
 \le \sum_{n=1}^{\infty}\extSbep[X_n]+\epsilon.
\end{align*}
 {\rm (P4)} is proved. {\rm (P5)} is obvious.
 \end{proof}

The following is the ``the convergence part'' of the   Borel-Cantelli Lemma   for a countably sub-additive capacity.
\begin{lemma} ({\em Borel-Cantelli's Lemma}) Let $\{A_n, n\ge 1\}$ be a sequence of events in $\mathcal F$.
Suppose that $V$ is a countably sub-additive capacity.   If $\sum_{n=1}^{\infty}V\left (A_n\right)<\infty$, then
$$ V\left (A_n\;\; i.o.\right)=0, \;\; \text{ where } \{A_n\;\; i.o.\}=\bigcap_{n=1}^{\infty}\bigcup_{i=n}^{\infty}A_i. $$
\end{lemma}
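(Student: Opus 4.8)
The plan is to exploit the nested set-theoretic structure of $\{A_n\; i.o.\}$ together with the two defining properties of a countably sub-additive capacity, namely monotonicity (built into the definition of a capacity) and countable sub-additivity of $V$. First I would fix an integer $m\ge 1$ and observe that, directly from the definition of the limit superior of events,
$$ \{A_n\; i.o.\}=\bigcap_{k=1}^{\infty}\bigcup_{i=k}^{\infty}A_i\subset \bigcup_{i=m}^{\infty}A_i, $$
since an intersection is contained in each of the sets being intersected. Monotonicity of $V$ then yields $V\big(\{A_n\; i.o.\}\big)\le V\big(\bigcup_{i=m}^{\infty}A_i\big)$ for every $m$.

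Next I would apply the countable sub-additivity of $V$ to the tail union on the right-hand side, obtaining
$$ V\Big(\bigcup_{i=m}^{\infty}A_i\Big)\le \sum_{i=m}^{\infty}V(A_i). $$
Chaining this with the previous inequality gives $V\big(\{A_n\; i.o.\}\big)\le \sum_{i=m}^{\infty}V(A_i)$, valid for all $m\ge 1$.

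Finally, since the full series $\sum_{i=1}^{\infty}V(A_i)$ converges by hypothesis, its tail $\sum_{i=m}^{\infty}V(A_i)$ tends to $0$ as $m\to\infty$. Letting $m\to\infty$ in the last inequality forces $V\big(\{A_n\; i.o.\}\big)=0$, which is exactly the assertion.

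As for the main obstacle: there is essentially none beyond invoking the two capacity axioms in the correct order, and this is the verbatim transcription of the classical convergence half of the Borel--Cantelli lemma. The only conceptual point worth stressing is that the argument never uses additivity of $V$ --- monotonicity together with countable sub-additivity suffices --- which is precisely why the statement survives the passage from a measure to a merely sub-additive capacity. The crucial bound $V\big(\bigcup_i A_i\big)\le\sum_i V(A_i)$ on arbitrary countable unions is exactly the countable sub-additivity assumed for $V$, so no separate approximation or continuity-from-above argument is needed.
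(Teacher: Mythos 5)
Your proof is correct and is essentially identical to the paper's: both bound $V(\{A_n\;i.o.\})$ by $V\big(\bigcup_{i\ge m}A_i\big)$ via monotonicity, then by the tail sum $\sum_{i\ge m}V(A_i)$ via countable sub-additivity, and let $m\to\infty$. No differences worth noting.
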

\begin{proof}  By the monotonicity and countable sub-additivity, it follows that
\begin{align*}
0\le V\left (\bigcap_{n=1}^{\infty}\bigcup_{i=n}^{\infty}A_i\right)
\le V\left (\bigcup_{i=n}^{\infty}A_i\right)\le \sum_{i=n}^{\infty}V\left (A_i\right)\to 0\; \text{ as } n\to\infty.
\end{align*}
\end{proof}

\begin{remark}
It is important to note that the condition that``$X$ is independent to $Y$ under $\Sbep$'' does not implies   that
``$X$ is independent to $Y$ under $\Capc$'' because the indicator functions $I\{X\in A\}$ and $I\{X\in A\}$ are not in $C_{l,Lip}(\mathbb R)$,
and also, ``$X$ is independent to $Y$ under $\Capc$'' does not implies   that
``$X$ is independent to $Y$ under $\Sbep$'' because $\Sbep$ is not an integral with respect to $\Capc$.
So, we have not ``the divergence part'' of the Borel-Cantelli Lemma.

Similarly, the conditions that ``$X$ and $Y$ are identically distributed under $\Sbep$''  and that that ``$X$ and $Y$ are identically distributed under
$\Capc$'' do not implies each other.
\end{remark}

\begin{lemma} \label{lem3} Suppose that $X\in \mathscr{H}$ and $C_{\Capc}(|X|)<\infty$.

(a) Then
\begin{equation}\label{eqlem3.1} \sum_{j=1}^{\infty} \frac{\Sbep[(|X|\wedge j)^2]}{j^2}<\infty.
\end{equation}

(b) Furthermore, if $\lim_{c\to \infty} \Sbep\left[|X|\wedge c\right]=\Sbep \left[|X|\right]$, then
\begin{equation}\label{eqlem3.2}
\Sbep [|X|]\le C_{\Capc}(|X|).
\end{equation}

(c) If $\Sbep$ is  countably sub-additive, then
\begin{equation}\label{eqlem3.3}\Sbep [|Y|]\le C_{\Capc}(|Y|), \;\; \forall Y\in\mathscr{H}
\end{equation}
and
\begin{equation}\label{eqlem3.4} \lim_{c\to \infty}\Sbep\left[(|X|-c)^+ \right]=0,\;\;
\lim_{c\to \infty} \Sbep\left[|X|\wedge c\right]=\Sbep \left[|X|\right]
\end{equation}
whenever $C_{\Capc}(|X|)<\infty$.
\end{lemma}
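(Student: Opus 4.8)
The plan is to handle the three parts in order, the unifying device being the layer-cake representation of $|X|$ and its truncations together with the identity $\Capc(A)=\extSbep[I_A]$, which lets me replace $\Sbep$ of an indicator (not a member of $\mathscr{H}$) by a capacity via the extension $\extSbep$. For part (a) I would begin from the pointwise telescoping estimate
$$(|X|\wedge j)^2=\sum_{k=1}^j\big[(|X|\wedge k)^2-(|X|\wedge(k-1))^2\big]\le \sum_{k=1}^j(2k-1)I\{|X|>k-1\}.$$
Since $(|X|\wedge j)^2\in\mathscr{H}$, we have $\Sbep[(|X|\wedge j)^2]=\extSbep[(|X|\wedge j)^2]$, and the finite sub-additivity of $\extSbep$ with $\extSbep[I\{|X|>k-1\}]=\Capc(|X|>k-1)$ gives
$$\Sbep[(|X|\wedge j)^2]\le \sum_{k=1}^j(2k-1)\,\Capc(|X|>k-1).$$
Dividing by $j^2$, summing over $j$, and interchanging the two nonnegative sums, I would use $\sum_{j\ge k}j^{-2}\le 2/k$ to collapse the double sum into $C\sum_{m\ge0}\Capc(|X|>m)$; the monotonicity of $t\mapsto\Capc(|X|>t)$ then yields $\sum_{m\ge1}\Capc(|X|>m)\le\int_0^\infty\Capc(|X|>t)\,dt=C_{\Capc}(|X|)<\infty$, while the $m=0$ term is at most $1$, proving (\ref{eqlem3.1}).

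For part (b) the idea is to bound each truncation by the Choquet integral. For a fixed $c$ and a partition $0=t_0<\cdots<t_N=c$, monotonicity of $I\{|X|>t\}$ in $t$ gives $|X|\wedge c=\int_0^cI\{|X|>t\}\,dt\le\sum_{i=1}^N(t_i-t_{i-1})I\{|X|>t_{i-1}\}$, and applying $\extSbep$ (which agrees with $\Sbep$ on $|X|\wedge c\in\mathscr{H}$) and its finite sub-additivity gives $\Sbep[|X|\wedge c]\le\sum_{i=1}^N(t_i-t_{i-1})\Capc(|X|>t_{i-1})$. This is an upper Riemann sum of the decreasing function $t\mapsto\Capc(|X|>t)$, so refining the partition yields $\Sbep[|X|\wedge c]\le\int_0^c\Capc(|X|>t)\,dt\le C_{\Capc}(|X|)$. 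Letting $c\to\infty$ and invoking the hypothesis $\Sbep[|X|\wedge c]\to\Sbep[|X|]$ delivers (\ref{eqlem3.2}).

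For part (c) I would upgrade the interchange to infinite sums using (P4) of Lemma~\ref{lem1}, which is exactly where countable sub-additivity enters. From $|Y|\le\epsilon\sum_{k\ge0}I\{|Y|>k\epsilon\}$ and (P4) I get $\Sbep[|Y|]\le\epsilon\sum_{k\ge0}\Capc(|Y|>k\epsilon)\le\epsilon+\int_0^\infty\Capc(|Y|>t)\,dt$, and $\epsilon\to0$ gives (\ref{eqlem3.3}). The same estimate applied to $(|X|-c)^+=\int_c^\infty I\{|X|>t\}\,dt$ shows $\Sbep[(|X|-c)^+]\le\int_c^\infty\Capc(|X|>t)\,dt$, whose right-hand side is the tail of the finite integral $C_{\Capc}(|X|)$ and hence tends to $0$; the second limit in (\ref{eqlem3.4}) then follows from $|X|=|X|\wedge c+(|X|-c)^+$, sub-additivity, and $\Sbep[|X|]\le C_{\Capc}(|X|)<\infty$ (from (\ref{eqlem3.3})), which sandwiches $\Sbep[|X|\wedge c]$ between $\Sbep[|X|]-\Sbep[(|X|-c)^+]$ and $\Sbep[|X|]$.

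The step I expect to be delicate is matching the Choquet integral \emph{exactly} rather than only up to an additive constant: a crude integer discretization produces bounds of the shape $1+C_{\Capc}(|X|)$, and removing the spurious constant forces the use of an arbitrarily fine mesh $\epsilon$ (or arbitrary partitions) together with the monotonicity of $t\mapsto\Capc(|X|>t)$, so that the Riemann sums approach the integral from the correct side. The other point requiring care is bookkeeping of hypotheses, namely that parts (a) and (b) need only finite sub-additivity of $\extSbep$, whereas part (c) genuinely requires countable sub-additivity through (P4).
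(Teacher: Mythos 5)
Your proof is correct and follows essentially the same route as the paper: the layer-cake/telescoping truncation bound plus finite sub-additivity of $\extSbep$ for parts (a) and (b), and countable sub-additivity via (P4) of Lemma~\ref{lem1} for part (c). The only presentational difference is how the spurious additive constant is removed in (b) and (c): the paper applies its integer-discretization bound to $|X|/\epsilon$ and invokes positive homogeneity of $\Sbep$ and $C_{\Capc}$, whereas you refine the partition mesh to $\epsilon$ directly --- the same device in different clothing.
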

\begin{proof} (a) Note
\begin{align*}
 (|X|\wedge j)^2 =& \sum_{i=1}^j |X|^2I\{i-1<|X|\le i\}+j^2I\{|X|>j\}\\
 \le & \sum_{i=1}^j i^2I\{i-1<|X|\le i\}+j^2I\{|X|>j\}\\
 =& \sum_{i=0}^{j-1}(i+1)^2I\{|X|>i\}-\sum_{i=1}^{j}i^2I\{|X|>i\} +j^2I\{|X|>j\}\\
 \le &1+\sum_{i=1}^{j-1}(2i+1)I\{|X|>i\} \\
 \le  & 1+3\sum_{i=1}^{j}iI\{|X|>i\}.
 \end{align*}
 So,
 $$ \Sbep\left[(|X|\wedge j)^2\right]=\extSbep\left[(|X|\wedge j)^2\right]\le 1+3\sum_{i=1}^{j}i\Capc(|X|>i), $$
by the (finite) sub-additivity of $\extSbep$. It follows that
\begin{align*}
&\sum_{j=1}^{\infty} \frac{\Sbep[(|X|\wedge j)^2 ]}{j^2}
\le \sum_{j=1}^{\infty} \frac{1+3\sum_{i=1}^ji\Capc(|X|>i)}{j^2} \\
\le & 2+3\sum_{i=1}^{\infty}i\Capc(|X|>i)\sum_{j=i+1}^{\infty} \frac{1}{j^2}\le 2+3\sum_{i=1}^{\infty} \Capc(|X|>i)
\le 2+3 C_{\Capc}(|X|).
\end{align*}
(\ref{eqlem3.1}) is proved.

(b) For $n>2$, note
\begin{align*}
&|X|\wedge n = \sum_{i=1}^n|X|I\{i-1<|X|\le i\}+nI\{|X|> n\}\\
\le &\sum_{i=1}^n i\big(I\{|X|> i-1\}-I\{|X|> i\}\big)+nI\{|X|> n\}
\le  1+\sum_{i=1}^nI\{|X|> i\}.
\end{align*}
It follows that
$$ \Sbep\big[(|X|\wedge n\big]=\extSbep\big[(|X|\wedge n\big]\le 1+\sum_{i=1}^n\Capc(|X|\ge i)\le 1+\int_0^n \Capc(|X|\ge x)dx. $$
Taking $n\to \infty$ yields
$$ \Sbep\big[|X|\big]=\lim_{n\to \infty}\Sbep\big[|X|\wedge n\big] \le 1+  C_{\Capc}(|X|). $$
By considering $|X|/\epsilon$ instead of $|X|$, we have
$$ \Sbep\left[\frac{|X|}{\epsilon}\right] \le 1+  C_{\Capc}\left(\frac{|X|}{\epsilon}\right)=1+\frac{1}{\epsilon} C_{\Capc}(|X|). $$
That is
$ \Sbep\big[|X|\big] \le \epsilon +  C_{\Capc}(|X|). $
Taking $\epsilon\to 0$ yields (\ref{eqlem3.2}).

(c) Now, from the fact that
$|Y|\le 1+\sum_{i=1}^{\infty}I\{|Y|\ge i\}, $
by the countable sub-additivity of $\Sbep$ and Property (P4) in Lemma \ref{lem1}, it follows that
$$\Sbep[|Y|]\le 1+\sum_{i=1}^{\infty}\extSbep[I\{|Y|\ge i\}]= 1+\sum_{i=1}^{\infty}\Capc(|Y|\ge i)\le 1+C_{\Capc}(|Y|). $$
And then (\ref{eqlem3.3}) is proved by the same argument in (b) above.

Letting $Y=(X-c)^+$ in (\ref{eqlem3.3}) yields
$$\Sbep\big[(|X|-c)^+\big]\le  C_{\Capc}\big((|X|-c)^+\big)=\int_c^{\infty} \Capc(|X|\ge x)dx\to 0 \; \text{ as } c\to \infty. $$
And so
$$ 0\le \Sbep[|X|]-\Sbep\big[|X|\wedge c\big]\le \Sbep\big[(|X|-c)^+\big]\to 0 \; \text{ as } c\to \infty. $$
(\ref{eqlem3.4}) is proved.
\end{proof}

\bigskip

{\em Proof of Theorems \ref{th3} of \ref{th4}.} We first prove (a) of Theorem~\ref{th4}.  (a) of Theorem~\ref{th3} follows from (a) of Theorem~\ref{th4}
because $\outCapc=\Capc$ when $\Capc$ is countably sub-additive.

 Without loss of generality, we assume $\Sbep[X_1]=0$. Define
\begin{equation}\label{eqproofth4.1} f_c(x)= (-c)\vee (x\wedge c), \;\; \widehat{f}_c(x)=x-f_c(x)
\end{equation}
 and
 $$\overline{X}_j=f_j(X_j)-\Sbep[f_j(X_j)],\;\; \overline{S}_j=\sum_{i=1}^j\overline{X}_i, \;\; j=1,2,\ldots. $$
 Then $f_c(\cdot), \widehat{f}_c(\cdot)\in C_{l,Lip}(\mathbb R)$, and $\overline{X}_j$, $j=1,2,\ldots$ are negatively dependent.
  Let $\theta>1$, $n_k=[\theta^k]$. For $n_k<n\le n_{k+1}$, we have
\begin{align*}
 \frac{S_n}{n}= &\frac{1}{n}\left\{ \overline{S}_{n_{k+1}}+\sum_{j=1}^{n_{k+1}}\Sbep[f_j(X_j)]+\sum_{j=1}^n \widehat{f}_j(X_j)-\sum_{j=n+1}^{n_{k+1}}f_j(X_j)\right]\\
 \le & \frac{\overline{S}_{n_{k+1}}^+}{n_k}+\frac{\sum_{j=1}^{n_{k+1}}|\Sbep[f_j(X_1)]|}{n_k}
 +\frac{ \sum_{j=1}^{n_{k+1}}| \widehat{f}_j(X_j)|}{n_k}\\
& +\frac{\sum_{j=n_k+1}^{n_{k+1}}\big\{f_j^+(X_j) -\Sbep[f_j^+(X_j)]\big\}}{n_k}
+\frac{\sum_{j=n_k+1}^{n_{k+1}}\big\{f_j^-(X_j) -\Sbep[f_j^-(X_j)]\big\}}{n_k}\\
& +\frac{(n_{k+1}-n_k)\Sbep |X_1|}{n_k}\\
=:&(I)_k+(II)_k+(III)_k+(IV)_k+(V)_k+(VI)_k.
 \end{align*}
 It is obvious that
 $$\lim_{k\to \infty} (VI)_k = (\theta-1)\Sbep [|X_1|] \le (\theta-1)C_{\Capc}(|X_1|)$$
 by Lemma~\ref{lem3} (b).

For $(I)_k$, applying (\ref{eqcor2.3}) yields
\begin{align*} \Capc\left(\overline{S}_{n_{k+1}}\ge \epsilon n_k\right)
\le & \frac{ \sum_{j=1}^{n_{k+1}}\Sbep\big[\overline{X}_j^2\big]}{\epsilon^2 n_k^2}
\le \frac{ 4\sum_{j=1}^{n_{k+1}}\Sbep\big[f_j^2(X_1)\big]}{\epsilon^2 n_k^2} \\
\le & \frac{4 n_{k+1}}{\epsilon^2n_k^2}+\frac{ 4\sum_{j=1}^{n_{k+1}}\Sbep\big[\big(|X_1|\wedge j)^2\big]}{\epsilon^2 n_k^2}.
\end{align*}
It is obvious that $\sum_k\frac{ n_{k+1}}{n_k^2}<\infty$. Also,
\begin{align*}
 \sum_{k=1}^{\infty}\frac{  \sum_{j=1}^{n_{k+1}}\Sbep\big[\big(|X_1|\wedge j)^2\big]}{  n_k^2}
\le &  \sum_{j=1}^{\infty} \Sbep\big[\big(|X_1|\wedge j)^2\}\big]\sum_{k: n_{k+1}\ge j}\frac{1}{n_k^2}\\
\le & C\sum_{j=1}^{\infty} \Sbep\big[\big(|X_1|\wedge j)^2\big]\frac{1}{j^2}<\infty
\end{align*}
by Lemma \ref{lem3} (a). Hence
$$ \sum_{k=1}^{\infty} \outCapc\left((I)_k\ge \epsilon\right)\le \sum_{k=1}^{\infty} \Capc\left((I)_k\ge \epsilon\right)<\infty.$$
 By the Borel-Cantelli lemma and the countable sub-additivity of $\outCapc$ , it follows that
$$ \outCapc\left(\limsup_{k\to \infty} (I)_k>\epsilon\right)=0, \;\;\forall \epsilon>0 $$
Similarly,
$$ \outCapc\left(\limsup_{k\to \infty} (IV)_k>\epsilon\right)=0,\;\; \outCapc\left(\limsup_{k\to \infty} (V)_k>\epsilon\right)=0, \;\;\forall \epsilon>0. $$

For $(II)_k$, note that by the (finite) sub-additivity,
$$|\Sbep[f_j(X_1)]|=|\Sbep[f_j(X_1)]-\Sbep X_1|\le \Sbep[|\widehat{f}_j(X_1)|]= \Sbep[(|X_1|-j)^+]\to 0. $$
It follows that
$$ (II)_k=\frac{n_{k+1}}{n_k}\frac{\sum_{j=1}^{n_{k+1}}|\Sbep[f_j(X_1)]|}{n_{k+1}}\to 0. $$

At last, we consider $(III)_k$. By the Borel-Cantelli Lemma, we will have
$$ \outCapc\left(\limsup_{k\to \infty}(III)_k>0\right)\le \outCapc\big( \{|X_j|> j\} \; i.o.\big)=0 $$
if we have shown that
\begin{equation}\label{eqproofth3.10}\sum_{j=1}^{\infty} \outCapc\big(  |X_j|> j \big)\le \sum_{j=1}^{\infty} \Capc\big(  |X_j|> j \big)<\infty.
\end{equation}
Let $g_{\epsilon}$ be a function satisfying that its derivatives of each order are bounded, $g_{\epsilon}(x)=1$ if $x\ge 1$, $g_{\epsilon}(x)=0$ if $x\le 1-\epsilon$, and $0\le g_{\epsilon}(x) \le 1$ for all $x$,
where $0<\epsilon<1$.
Then
$$ g_{\epsilon}(\cdot) \in C_{l,Lip}(\mathbb R)\;  \text{ and }\; I\{x\ge 1\}\le g_{\epsilon}(x)\le I\{x>1-\epsilon\}. $$
Hence, by (\ref{eq3.1}),
 \begin{align*}
  \sum_{j=1}^{\infty} \Capc\big(  |X_j|> j \big)
 \le & \sum_{j=1}^{\infty}\Sbep\left[g_{1/2}\big(|X_j|/ j\big)\right]= \sum_{j=1}^{\infty}\Sbep\left[g_{1/2}\big(|X_1|/ j\big)\right]
  \;\; (\text{ since } X_j\overset{d}= X_1 )\nonumber \\
 \le &  \sum_{j=1}^{\infty} \Capc\big(  |X_1|> j/2 \big)\le 1+C_{\Capc}(2|X_1|)<\infty.\nonumber
 \end{align*}
(\ref{eqproofth3.10}) is proved. So, we conclude that
$$ \outCapc\left(\limsup_{n\to \infty} \frac{S_n}{n}>\epsilon\right)=0 \;\; \forall \epsilon>0,$$
by the arbitrariness of $\theta>1$.  Hence
$$ \outCapc\left(\limsup_{n\to \infty} \frac{S_n}{n}>0\right)
=\outCapc\left(\bigcup_{k=1}^{\infty}\left\{\limsup_{n\to \infty} \frac{S_n}{n}>\frac{1}{k}\right\}\right)
\le \sum_{k=1}^{\infty}\outCapc\left( \limsup_{n\to \infty} \frac{S_n}{n}>\frac{1}{k} \right)=0.
$$
Finally,
$$  \outCapc\left(\liminf_{n\to \infty} \frac{S_n}{n}<\cSbep[X_1]\right)
=\outCapc\left(\limsup_{n\to \infty} \frac{\sum_{k=1}^{n}(-X_k-\Sbep[-X_k])}{n}>0\right)=0. $$
The proof of (\ref{eqth3.1}) is now completed.

For (b) of Theorems \ref{th3} and \ref{th4}, suppose $C_{\Capc}(|X_1|)=\infty$. Then, by (\ref{eq3.1}),
\begin{align}\label{eqproofth3.11}
 \sum_{j=1}^{\infty}\Sbep\left[g_{1/2}\big(\frac{|X_j|}{Mj}\big)\right]
=&\sum_{j=1}^{\infty}\Sbep\left[g_{1/2}\big(\frac{|X_1|}{Mj}\big)\right]\;\; (\text{ since } X_j\overset{d}= X_1 )\\
\ge &\sum_{j=1}^{\infty} \Capc\big(|X_1|> M j)=\infty, \;\; \forall M>0.\nonumber
\end{align}
For any $l\ge 1$,
\begin{align*}
&\cCapc\left(\sum_{j=1}^n g_{1/2}\big(\frac{|X_j|}{Mj}\big)<l\right)
 =\cCapc\left(\exp\Big\{-\frac{1}{2}\sum_{j=1}^n g_{1/2}\big(\frac{|X_j|}{Mj}\big)\Big\}>e^{-l/2}\right)\\
\le  & e^{l/2}\cSbep \left[\exp\Big\{-\sum_{j=1}^n g_{1/2}\big(\frac{|X_j|}{Mj}\big)\Big\}\right]
=e^{l/2}\prod_{j=1}^n\cSbep \left[\exp\Big\{- \frac{1}{2} g_{1/2}\big(\frac{|X_j|}{Mj}\big)\Big\}\right]
\end{align*}
by (\ref{eq3.1}) again and the independence because $0\le \exp\Big\{- \frac{1}{2}g_{1/2}\big(\frac{|x_j|}{Mj}\big)\Big\}\in C_{l,Lip}(\mathbb R)$. Applying the elementary inequality
$$ e^{-x}\le 1-\frac{1}{2} x \le e^{-x/2}, \;\; \forall 0\le x\le 1/2$$
yields
 $$\cSbep \left[\exp\Big\{- \frac{1}{2} g_{1/2}\big(\frac{|X_j|}{Mj}\big)\Big\}\right]
 \le 1-\frac{1}{4} \Sbep\Big[  g_{1/2}\big(\frac{|X_j|}{Mj}\big)\Big]
 \le \exp\left\{-\frac{1}{4} \Sbep\Big[  g_{1/2}\big(\frac{|X_j|}{Mj}\big)\Big]\right\}. $$
 It follows that
$$\cCapc\left(\sum_{j=1}^n g_{1/2}\big(\frac{|X_j|}{Mj}\big)<l\right)\le
\exp\left\{-\frac{1}{4} \sum_{j=1}^n \Sbep\Big[  g_{1/2}\big(\frac{|X_j|}{Mj}\big)\Big]\right\}\to 0\;\;\text{ as }  n\to \infty, $$
by (\ref{eqproofth3.11}). So
$$
 \Capc\left(\sum_{j=1}^n g_{1/2}\big(\frac{|X_j|}{Mj}\big)> l\right)\to 1\;\;\text{ as }  n\to \infty.
$$

If $\Capc$ is continuous as assumed in Theorem \ref{th3}, then $\Capc\equiv \outCapc$. If $\Sbep$ is countably sub-additive as assumed in Theorem \ref{th4}, then
$$\outCapc(|X|\ge c)\le \Capc(|X|\ge c)\le\Sbep\left[g_{\epsilon}(|X|/c)\right]\le \outCapc\big(|X|\ge c(1-\epsilon)\big), $$
by (\ref{eq3.1}) and (\ref{eq3.2}). In either case, we have
 $$
 \outCapc\left(\sum_{j=1}^n g_{1/2}\big(\frac{|X_j|}{Mj}\big)>l/2\right)
 \ge \Capc\left(\sum_{j=1}^n g_{1/2}\big(\frac{|X_j|}{Mj}\big)> l\right)\to 1\;\;\text{ as }  n\to \infty.
 $$

Now, by the continuity of $\outCapc$,
\begin{align*}
&
\outCapc\left(\limsup_{n\to\infty} \frac{|X_n|}{n}>\frac{M}{2}\right)=\outCapc\left(\big\{\frac{|X_j|}{Mj}> \frac{1}{2}\big\} \;\;i.o.\right)
\ge
\outCapc\left(\sum_{j=1}^{\infty} g_{1/2}\big(\frac{|X_j|}{Mj}\big)=\infty\right)\\
=&\lim_{l\to \infty} \outCapc\left(\sum_{j=1}^{\infty} g_{1/2}\big(\frac{|X_j|}{Mj}\big)> l/2\right)
=\lim_{l\to \infty} \lim_{n\to \infty} \outCapc\left(\sum_{j=1}^n g_{1/2}\big(\frac{|X_j|}{Mj}\big)> l/2\right)=1.
\end{align*}
On the other hand,
$$\limsup_{n\to\infty} \frac{|X_n|}{n}\le \limsup_{n\to\infty}\Big(\frac{|S_n|}{n}+\frac{|S_{n-1}|}{n}\Big)\le 2\limsup_{n\to\infty} \frac{|S_n|}{n}.
$$
It follows that
$$ \outCapc\left(\limsup_{n\to\infty} \frac{|S_n|}{n}>m\right)=1,\;\; \forall m>0. $$
Hence
$$ \outCapc\left(\limsup_{n\to\infty} \frac{|S_n|}{n}=+\infty\right)=\lim_{m\to \infty} \outCapc\left(\limsup_{n\to\infty} \frac{|S_n|}{n}>m\right)=1, $$
which  contradict (\ref{eqth3.2}) and (\ref{eqth4.2}). So, $C_{\Capc}(|X_1|)<\infty$.

Finally, we consider (c) of Theorem \ref{th3}.  For (\ref{eqcor3.2.1}), we first show that
\begin{equation}\label{eqproofcor3.2.1}
\Capc\left(\frac{S_n}{n}>\Sbep[X_1]-\epsilon\right)\to 1, \;\; \forall \epsilon>0.
\end{equation}
Let $f_c(x)$ and $\widehat{f}_c(x)$ be defined as in (\ref{eqproofth4.1}). Then
\begin{align*}
\Capc\left(\frac{|\sum_{k=1}^n \widehat{f}_c(X_k)|}{n}>\epsilon\right)
\le \frac{\sum_{k=1}^n \Sbep[|\widehat{f}_c(X_k)|]}{\epsilon n}\le \frac{\Sbep[(|X_1|-c)^+]}{\epsilon}\to 0
\;\; \text{ as } c\to \infty,
\end{align*}
$\Sbep[X_1]-\Sbep[f_c(X_1)]\to 0 $ as $c\to\infty$, and by Theorem \ref{th2c},
\begin{align*}
&\cCapc\left(\frac{ \sum_{k=1}^n f_c(X_k)}{n}\le \Sbep[f_c(X_1)]-\epsilon\right)
= \cCapc\left(  \sum_{k=1}^n \big(- f_c(X_k)-\cSbep[-f_c(X_k)]\big)\ge n\epsilon\right)\\
\le & \frac{\cSbep\left[\left| \left( \sum_{k=1}^n \big(- f_c(X_k)-\cSbep[-f_c(X_k)]\big)\right)^+\right|^2\right]}{n^2\epsilon^2}
\le  2\frac{\Sbep\left[\big(- f_c(X_1)-\cSbep[-f_c(X_1)]\big)^2\right]}{n\epsilon^2} \\
\le & \frac{ 2(2c)^2}{n\epsilon^2}\to 0 \;\; \text{ as } n\to \infty.
\end{align*}
Then (\ref{eqproofcor3.2.1}) is proved. By considering $\{-X_n;n\ge 1\}$  instead, from (\ref{eqproofcor3.2.1}) we have
\begin{equation}\label{eqproofcor3.2.2}
\Capc\left(\frac{S_n}{n}\le\cSbep[X_1]+\epsilon\right)\to 1, \;\; \forall \epsilon>0.
\end{equation}
Note the independence. We conclude that
\begin{align*}
&\Capc\left(\frac{S_n}{n}<\cSbep[X_1]+\epsilon \; \text { and }\;  \frac{S_{n^2}-S_n}{n^2-n}>\Sbep[X_1]-\epsilon\right) \\
\ge & \Sbep\left[\phi\left(\frac{S_n}{n}-\cSbep[X_1]\right)\phi\left(\Sbep[X_1]-\frac{S_{n^2}-S_n}{n^2-n}\right)\right]\\
\ge & \Sbep\left[\phi\left(\frac{S_n}{n}-\cSbep[X_1]\right)\right]
\cdot \Sbep\left[\phi\left(\Sbep[X_1]-\frac{S_{n^2}-S_n}{n^2-n}\right)\right]\\
\ge & \Capc\left(\frac{S_n}{n}<\cSbep[X_1]+\frac{\epsilon}{2}\right)\cdot \Capc\left(  \frac{S_{n^2}-S_n}{n^2-n}>\Sbep[X_1]-\frac{\epsilon}{2}\right) \to 1, \;\; \forall \epsilon>0,
\end{align*}
where $\phi(x)\in C_{l,Lip}(\mathbb R)$ is a  function such that $I\{x\le \epsilon\}\ge \phi(x)\ge I\{x\le \epsilon/2\}$. Now, by (\ref{eqth3.1}) and the continuity of $\Capc$,
\begin{align*}
& \Capc\left( \liminf_{n\to \infty}\frac{S_n}{n}\le \cSbep[X_1]+\epsilon  \;  \text{ and }\;
  \limsup_{n\to \infty}\frac{S_n}{n}\ge  \Sbep[X_1]-\epsilon\right) \\
\ge & \Capc\left( \liminf_{n\to \infty}\frac{S_n}{n}\le \cSbep[X_1]+\epsilon  \;  \text{ and }\;
  \limsup_{n\to \infty}\frac{S_{n^2}-S_n}{n^2-n}\ge  \Sbep[X_1]-\epsilon\right) \\
  \ge & \Capc\left(\frac{S_n}{n}<\cSbep[X_1]+\epsilon \; \text { and }\;  \frac{S_{n^2}-S_n}{n^2-n}>\Sbep[X_1]-\epsilon\;\; i.o.\right)\\
\ge & \limsup_{n\to \infty} \Capc\left(\frac{S_n}{n}<\cSbep[X_1]+\epsilon \; \text { and }\;  \frac{S_{n^2}-S_n}{n^2-n}>\Sbep[X_1]-\epsilon\right)=1, \;\; \forall \epsilon>0.
\end{align*}
B the continuity of $\Capc$ again,
$$\Capc\left( \liminf_{n\to \infty}\frac{S_n}{n}\le \cSbep[X_1]   \;  \text{ and }\;
  \limsup_{n\to \infty}\frac{S_n}{n}\ge  \Sbep[X_1] \right)=1, $$
which, together with (\ref{eqth3.1}) implies (\ref{eqcor3.2.1}).

Finally, note
$$ \frac{S_n}{n}-\frac{S_{n-1}}{n-1}=\frac{X_n}{n}-\frac{S_{n-1}}{n-1}\frac{1}{n}\to 0\;\; a.s. \Capc. $$
It can be verified that (\ref{eqcor3.2.1}) implies (\ref{eqcor3.2.2}). \hfill $\Box$

\bigskip
{\em Proof of Corollary~\ref{cor3.1}}. It is sufficient to  note the facts that $\Capc (A)=\Sbep[I_A]$ is continuous in $\mathcal H=\{A, I_A\in \mathscr{H}\}$ and
all events we consider are in $\mathcal H$ because $\mathscr{H}$ is monotone and  $I\{x\ge 1\}=\lim_{\epsilon\to 0} g_{\epsilon}(x)$. \hfill $\Box$

\Acknowledgements{This work was supported  by National Natural Science Foundation of China (Grant No. 11225104) and the Fundamental Research Funds for the Central Universities.}


\end{document}